\numberwithin{equation}{section}
\newtheorem{theorem}{Theorem}
\numberwithin{theorem}{section}
\newtheorem*{theorem*}{Theorem}
\newtheorem{corollary}{Corollary}
\numberwithin{corollary}{section}
\newtheorem{lemma}{Lemma}
\numberwithin{lemma}{section}
\newtheorem{proposition}{Proposition}
\numberwithin{proposition}{section}
\newtheorem{assumption}{Assumption}
\numberwithin{assumption}{section}
\newtheorem{definition}{Definition}
\numberwithin{definition}{section}
\newtheorem{remark}{Remark}
\numberwithin{remark}{section}
\newtheorem{example}{Example}
\numberwithin{example}{section}
\newcommand{\leref}{Lemma~\ref}
\newcommand{\exref}{Example~\ref}
\newcommand{\prref}{Proposition~\ref}
\newcommand{\thref}{Theorem~\ref}
\newcommand{\reref}{Remark~\ref}
\newcommand{\asref}{Assumption~\ref}
\newcommand{\eps}{\varepsilon}
\newcommand{\E}{\mathbb{E}}
\newcommand{\Gr}{{\text{Gr}}}
\newcommand{\R}{\mathbb{R}}
\newcommand{\X}{\mathbb{X}}
\newcommand{\kP}{\mathfrak{P}}
\newcommand{\Equiv}{\Longleftrightarrow}
\newcommand{\cC}{\mathcal{C}}
\newcommand{\cT}{\mathcal{T}}
\newcommand{\kB}{\mathfrak{B}}
\newcommand{\supp}{\text{supp}}
\title[]{Transport plans with domain constraints}
\author[]{Erhan Bayraktar} \thanks{This research was supported in part by the National Science Foundation.} 
\address{Department of Mathematics, University of Michigan}
\email{erhan@umich.edu}
\author[]{Xin Zhang} 
\address{Department of Mathematics, University of Michigan}
\email{zxmars@umich.edu}
\author[]{Zhou Zhou}
\address{School of Mathematics and Statistics, University of Sydney}
\email{zhou.zhou@sydney.edu.au}
\date{\today}
\keywords{Martingale optimal transport, domain constraints, bounded volatility/quadratic variation, $G$-expectations, Kantorovich duality, monotonicity principle.}
\subjclass[2010]{Primary 60G42, 60G44; Secondary 49Q20, 49N05.}
\begin{document}
\maketitle

\begin{abstract}
This paper focuses on martingale optimal transport problems when the martingales are assumed to have bounded quadratic variation.
First, we give a result that characterizes the existence of a probability measure satisfying some convex transport constraints in addition to having given initial and terminal marginals. Several applications are provided: martingale measures with volatility uncertainty, optimal transport with capacity constraints, and Skorokhod embedding with bounded times. Next, we extend this result to multi-marginal constraints.
Finally, we consider an optimal transport problem with constraints and obtain its Kantorovich duality. A corollary of this result is a monotonicity principle which gives a geometric way of identifying the optimizer. 
\end{abstract}

\section{Introduction}

Martingale optimal transport has been an active research area in the past decade due to its applications in robust hedging problems in Mathematical Finance. In this set-up one is only given vanilla option prices at certain maturities, which thanks to a result by \cite{RePEc:ucp:jnlbus:v:51:y:1978:i:4:p:621-51} corresponds to fixing the marginals of the martingale measures at these maturities, and tries to obtain model independent no-arbitrage price bounds.
Mathematically, given two probability measures $\alpha, \beta$ on $\mathbb{R}^d$ and a cost function $c$ on $\mathbb{R}^d \times \mathbb{R}^d$, one wants to minimize $\E^P[c(X,Y)]$ among all joint distributions $P$ on $\mathbb{R}^d \times \mathbb{R}^d$ such that $P$ has initial marginal $\alpha$, terminal marginal $\beta$ and $\E^P[Y|X]=X$. However, it is not clear whether there exists such a $P$ satisfying both the marginal and martingale constraints. This question was answered by Strassen \cite{Strassen}: assume $\alpha$ and $\beta$ have finite first moments,
\begin{equation*}
\begin{aligned}
&\exists\,P\text{ s.t. }P\circ X^{-1}=\alpha; \ P\circ Y^{-1}=\beta; \ \E^P[Y|X]=X\  \\
& \Equiv\   \alpha(f)\leq\beta(f),\ \forall\text{ convex functions } f.
\end{aligned}
\end{equation*}
For martingale optimal transport and its application in Mathematical Finance, we refer readers e.g. to \cite{MR3066985},\cite{MR3161649},\cite{MR3256817},\cite{MR3456332},\cite{MR3719009}, and the references therein.

Another strand of literature considered pricing and hedging problems under volatility uncertainty (volatility is not known but is assumed to belong to a bounded interval): \cite{doi:10.1080/13504869500000005,doi:10.1080/13504869500000007,MR2012820,MR1768218}. This is also related to the notion of G-expectations; see e.g. \cite{MR2397805,MR3114916,Peng}. However, in the volatility uncertainty literature, only the underlying stock price process is assumed to be observable and no liquid option prices are given as in the martingale optimal transport problem described above.

In this article our aim is to combine these two different ideas of model uncertainty and analyze the martingale transport problem with bounded volatility. Another motivating factor for us is the fact that  without the volatility restriction, the hedging prices obtained from the martingale optimal transport are all the same for large classes of European, American, Asian, Bermudan options with similar forms of payoff functions (as observed in  \cite{MR3763082} and proved in \cite{MR4047983}), which is of course not financially realistic. On the other hand, there are results indicating that once we have the bounded volatility restriction, these prices are generally not equal (see e.g.,  \cite{2016arXiv160405517A} and \cite{zz4}), which is more practically viable. 

First, we determine when there exists such a martingale measure satisfying the given volatility constraints and the marginals. 
Using \cite[Theorem 7]{Strassen} together with a measurable selection argument, we obtain \prref{p1}. Based on this proposition, we prove a general result \thref{t1}. After giving a financial interpretation of this theorem (see \reref{rmk:finan}), we provide several examples: 1) martingale measures with volatility uncertainty, see subsection~\ref{subsec:volatility}; 2) optimal transport with capacity constraints, see subsection~\ref{subsec:capacity}; 3) Skorokhod embedding with bounded times,  see subsection~\ref{subsec:skorokhod}. 

Subsequently, we extend \thref{t1} to the case of finitely many marginals using a pasting argument; see \thref{t2}. By taking weak limits, we obtain the corresponding in continuous time when all one-dimensional marginals are given, which characterizes the existence of peacocks under constraints; see \thref{t3} and \reref{kellerer}. We also provide examples 
concerning the existence of martingale measures with volatility uncertainty in the case of finitely many marginals and one-dimensional marginals; see \exref{ex:mar1} and \exref{ex:mar2}. 

Finally, we consider the optimization problem \eqref{eo1}, and obtain a duality result. It is a natural generalization of  \cite[Theorem 9.5]{MR3706606} in our setup. Using the duality result established, we prove a general monotonicity principle which characterizes the geometric structure of the optimizer.

The rest of the paper is organized as follows. In the next section, we establish the existence result when there are only two marginals given. In Section 3, we obtain the result when there are finitely many marginals given. In Section 4 we have the result with all the 1-D marginals in continuous time. In Section 5, we obtain the Kantorovich duality. Finally, in Section 6, we deduce the monotonicity principle from the duality result.

\section{Result with two marginals}

We will let $\Omega$ be one of the following three spaces:
\begin{itemize}
\item $\X^{N+1}$, where $\X$ is a polish space, and $N\in\mathbb{N}$;
\item $C[0,1]$, the space of continuous functions $f: [0,1]\mapsto\X$, endowed with the uniform distance metric, where $\X\subset\R^d$ is connected and closed;
\item $D[0,1]$, the space of RCLL functions $g: [0,1]\mapsto\X$, endowed with the Skorokhod metric, where $\X\subset\R^d$ is connected and closed. 
\end{itemize}
Let $T=N$ if $\Omega=\X^{N+1}$ and $T=1$ if $\Omega=C[0,1],D[0,1]$. For any probability measure $P$ and random variable $Y$, $\E^P[Y]:=\E^P[Y^+]-\E^P[Y^-]$ with the convention $\infty-\infty=-\infty$. 

The spaces of probability measures in this paper are endowed with the relativized weak* topology (see e.g. \cite[Appendix 6]{MR3859905}, \cite[Section 6]{Strassen}) as we describe next.
 
Let $G$ and $H$ be continuous functions on $\X$ that are positive and bounded away from $0$.
For $F=G,H$, let
$$\kP_F :=\{\mu\in\kP(\X):\ \mu(F)<\infty\}\ (\text{simply} \ \kP \ \text{if} \ F=1),$$
and $\mathfrak{C}_F$ (simply $\mathfrak{C}$ if $F=1$) be the Banach space of continuous functions $f$ on $\X$ such that
$$\sup_{x\in\X}\frac{|f(x)|}{F(x)}<\infty.$$ 
Define $J:=G\oplus H$ the continuous function on $\X^2$,
$$J(x_0,x_1):=G\oplus H(x_0,x_1):=G(x_0)+H(x_1).$$
Let
$$\kP_J:=\{\mu\in\kP(\X^2):\ \mu(J)<\infty\},$$
and $\mathfrak{C}_{J}$ be set of continuous functions $f$ on $\X^2$ such that
$$\sup_{x\in\X^2}\frac{|f(x)|}{J(x)}<\infty.$$
For $L=G,H,J$, we say a subset of probability measures $\Lambda\subset\kP_L$ is $L$-closed, if for any $(P_n)\subset\Lambda$ and $P$ with
\begin{equation}\label{ee3}
\E^{P_n}[l]\rightarrow\E^{P}[l],\quad\forall l\in \mathfrak{C}_L,
\end{equation}
 we have $P\in\Lambda$. That is, we will endow spaces of probability measures with the topology generated by \eqref{ee3}. When no such $L$ is specified (e.g., we simply say a probability set is closed or weakly closed), then by default we endow the underlying space of probabiliy measures with weak topology, i.e., the topology generated by \eqref{ee3} with $\mathfrak{C}_L$ being the set of bounded and continuous functions.

Let $X$ be the canonical process on $\Omega$ and $(\mathcal{F}_t)_t$ be the filtration generated by $X$. Let $\Gamma:\,\X\mapsto 2^{\kP(\Omega)}$ be such that $\emptyset\neq\Gamma(x)\subset\kP(\Omega^x)$, where $\kP(\Omega)$ is the set of Borel probability measures on $\Omega$, and
$$\Omega^x:=\{\omega\in\Omega:\ \omega_0=x\}.$$
Here $\Gamma(x)$ represents the set of admissible transport plans given $X_0=x$. We assume that the graph of $\Gamma$,
$$\Gr(\Gamma):=\{(x,P'):\ x\in\X, P'\in\Gamma(x)\}$$
is analytic. Denote
$$C:=\{P\in\kP(\Omega):\ P|_{X_0=\omega_0}\in\Gamma(\omega_0),\ P\text{-a.s.}\ \omega\}.$$

Let $\alpha \in \kP_G$ and $\beta \in \kP_H$ be two probability measures on $\mathbb{X}$. Let
\begin{equation}\label{e5}
A:=\{P\in\kP(\Omega):\ P\circ X_0^{-1}=\alpha\}\quad\text{and}\quad B:=\{P\in\kP(\Omega):\ P\circ X_T^{-1}=\beta\}.
\end{equation}

\begin{remark}\label{r1}
Thanks to the analyticity assumption of $\Gr(\Gamma)$, by the Jankov-von Neumann Theorem (see, e.g., \cite[Proposition 7.49]{Shreve}), there exists a universally measurable selector $P'(\cdot)$ such that $P'(x)\in\Gamma(x)$ for any $x\in\X$. Then $P_0\otimes P'\in C$ for any probability measure $P_0$ on $\X$, where
$$P_0\otimes P'(I):=\int_I P_0(d\omega_0)P'(\omega_0,d\omega),\quad I\in\mathcal{B}(\Omega).$$
In particular, this implies that $A\cap C\neq\emptyset$.
\end{remark}

Our aim is to find a necessary and sufficient condition for $A\cap B\cap C\neq\emptyset$. In particular, $\Gamma$ here is treated as a transport constraint from time $0$ to time $T$, which is different from the marginal constraints. Below is the main result of this section.


\begin{theorem}\label{t1}
Assume $\alpha\in\kP_G$, $\beta\in\kP_H$, and
$$(A\cap C)_{0,T}:=\{P\circ (X_0,X_T)^{-1}:\ P\in A\cap C\}$$
is convex and $J$-closed. Then
\begin{equation}\label{e1}
A\cap B\cap C\neq\emptyset\ \Equiv\ \alpha(f^\Gamma)\leq\beta(f),\ \forall\,f\in \mathfrak{C}_H,
\end{equation}
where $\beta(f): =\int_{\X} f \  \beta(dx)$, and $f^\Gamma(x)$ is defined by
\begin{equation}\label{e7}
f^\Gamma(x):=\inf_{Q\in\Gamma(x)}\E^Q[f(X_T)].
\end{equation}
\end{theorem}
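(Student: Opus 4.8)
The plan is to reduce the two-sided statement to a single geometric fact about the set of attainable terminal marginals
$$\mathcal M:=\{P\circ X_T^{-1}:\ P\in A\cap C\}\subset\kP_H,$$
namely that $A\cap B\cap C\neq\emptyset$ is the same as $\beta\in\mathcal M$, and then to separate $\beta$ from $\mathcal M$. I would first dispose of the easy implication ``$\Rightarrow$''. Given $P\in A\cap B\cap C$ and $f\in\mathfrak C_H$, disintegrate $P$ along $X_0$: since $P\in C$, the conditional law $P|_{X_0=x}$ lies in $\Gamma(x)$ for $\alpha$-a.e.\ $x$, so $\E^{P|_{X_0=x}}[f(X_T)]\ge f^\Gamma(x)$ by the very definition \eqref{e7}; integrating against $\alpha=P\circ X_0^{-1}$ and using $P\circ X_T^{-1}=\beta$ gives $\beta(f)=\E^P[f(X_T)]\ge\alpha(f^\Gamma)$. (Here one first notes that $f^\Gamma$ is universally measurable: the map $(x,Q)\mapsto\E^Q[f(X_T)]$ is Borel and $\Gr(\Gamma)$ is analytic, so the infimum in \eqref{e7} is upper semianalytic.)

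The heart of the converse is the dual identity
$$\inf_{P\in A\cap C}\E^P[f(X_T)]=\alpha(f^\Gamma),\qquad f\in\mathfrak C_H.$$
The inequality ``$\ge$'' is the computation just made, valid for every $P\in A\cap C$. For ``$\le$'' I would invoke the Jankov--von Neumann selection already used in \reref{r1}: for each $\eps>0$ pick a universally measurable $\eps$-optimizer $x\mapsto Q^\eps_x\in\Gamma(x)$ with $\E^{Q^\eps_x}[f(X_T)]\le f^\Gamma(x)+\eps$, and set $P^\eps:=\alpha\otimes Q^\eps_\cdot$. Then $P^\eps\in A\cap C$ and $\E^{P^\eps}[f(X_T)]\le\alpha(f^\Gamma)+\eps$, which proves ``$\le$'' upon letting $\eps\downarrow0$. (The conventions for $\E[\cdot]$ keep every term meaningful even when $f^\Gamma$ takes the value $-\infty$ on an $\alpha$-nonnull set, in which case the hypothesis $\alpha(f^\Gamma)\le\beta(f)$ is automatic and plays no role in the separation below.)

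For ``$\Leftarrow$'' I would argue by contradiction: suppose $\beta\notin\mathcal M$. The set $\mathcal M$ is convex, being the image of the convex set $(A\cap C)_{0,T}$ under the linear second-marginal map, and nonempty by \reref{r1}. Granting for the moment that $\mathcal M$ is $H$-closed, the Hahn--Banach theorem strictly separates the point $\beta$ from the closed convex set $\mathcal M$ in the locally convex space whose topological dual is $\mathfrak C_H$: there is $f\in\mathfrak C_H$ with $\beta(f)<\inf_{\nu\in\mathcal M}\nu(f)$. But $\inf_{\nu\in\mathcal M}\nu(f)=\inf_{P\in A\cap C}\E^P[f(X_T)]=\alpha(f^\Gamma)$ by the dual identity, so $\beta(f)<\alpha(f^\Gamma)$, contradicting the hypothesis. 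Hence $\beta\in\mathcal M$, i.e.\ some $P\in A\cap C$ satisfies $P\circ X_T^{-1}=\beta$, giving $P\in A\cap B\cap C$.

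The main obstacle is the deferred claim that $\mathcal M$ is $H$-closed, and this is precisely where the standing assumption that $(A\cap C)_{0,T}$ is $J$-closed (together with the fixed first marginal $\alpha$) is used. I would take $\nu_n\in\mathcal M$ with $\nu_n\to\nu$ in $\kP_H$, lift them to $\pi_n\in(A\cap C)_{0,T}$ with $(\pi_n)_0=\alpha$ and $(\pi_n)_1=\nu_n$, and show $\{\pi_n\}$ is relatively compact in the $J$-topology. Tightness is immediate, since the marginal families $\{\alpha\}$ and $\{\nu_n\}$ are each tight and $\pi_n(K_0^c\times\X)+\pi_n(\X\times K_1^c)=\alpha(K_0^c)+\nu_n(K_1^c)$ controls the escape of mass. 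Uniform $J$-integrability reduces, via $\mathbf{1}_{J\ge M}\le\mathbf{1}_{G\ge M/2}+\mathbf{1}_{H\ge M/2}$ and $J=G\oplus H$, to four terms: the two pure terms $\alpha(G\mathbf{1}_{G\ge M/2})$ and $\sup_n\nu_n(H\mathbf{1}_{H\ge M/2})$ vanish as $M\to\infty$ (the latter because $\nu_n\to\nu$ in $\kP_H$ forces uniform $H$-integrability), and each cross term, e.g.\ $\pi_n(G(X_0)\mathbf{1}_{H(X_T)\ge M/2})$, is handled by splitting $G$ at a level $R$: it is at most $\alpha(G\mathbf{1}_{G\ge R})+R\,\nu_n(H\ge M/2)$, made small by first taking $R$ and then $M$ large, uniformly in $n$. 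Thus a subsequence of $\pi_n$ converges in the $J$-topology to some $\pi$, which lies in $(A\cap C)_{0,T}$ by $J$-closedness and satisfies $\pi_1=\nu$; hence $\nu\in\mathcal M$, which establishes the closedness and completes the argument.
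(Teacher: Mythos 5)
Your proof is correct, and although it runs on the same machinery as the paper --- the dual identity $\alpha(f^\Gamma)=\inf_{P\in A\cap C}\E^P[f(X_T)]$ via a Jankov--von Neumann $\eps$-optimal selector, Hahn--Banach separation in the $(\kP_H,\mathfrak{C}_H)$ duality, and a Strassen-type relative $J$-compactness argument --- it assembles them in a genuinely different order. The paper first proves the case $\Omega=\X^2$ (\prref{p1}) and then reduces the general theorem to it by pushing the constraint forward to $\Gamma_{0,T}(x):=\{P\circ(X_0,X_T)^{-1}:P\in\Gamma(x)\}$, applying \prref{p1} to get a plan $P^*$ on $\X^2$, and finally lifting $P^*$ back to $\Omega$; that last step costs a second measurable-selection argument (the analytic sets $I_1$, $I_2$ and another application of Jankov--von Neumann) to choose, measurably in $x$, some element of $\Gamma(x)$ whose $(X_0,X_T)$-law matches the selected kernel $Q^*(x)$. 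You instead separate $\beta$ directly from $\mathcal M=\{P\circ X_T^{-1}:P\in A\cap C\}$ with $A\cap C\subset\kP(\Omega)$, and use $(A\cap C)_{0,T}$ only as the vehicle for the closedness argument; since every element of $(A\cap C)_{0,T}$ is by definition the $(X_0,X_T)$-law of an actual $P\in A\cap C$, the conclusion $\beta\in\mathcal M$ instantly produces $P\in A\cap B\cap C$ and no lifting is required. Your route is shorter and also spells out the tightness and uniform $J$-integrability estimates that the paper's appendix delegates to Strassen; the paper's two-stage route buys a free-standing \prref{p1} that is reused later (in \thref{t2} and in the duality section). Two small points to tidy up: $\mathcal M$ need not lie in $\kP_H$ a priori, so strictly one should separate $\beta$ from the $H$-closure of $\mathcal M\cap\kP_H$ (the paper's own proof of \prref{p1} elides the same issue, so this is not a gap relative to its standard of rigor); and the identification $\pi_1=\nu$ of the second marginal of the $J$-limit deserves one line, namely that $(x_0,x_1)\mapsto l(x_1)$ belongs to $\mathfrak{C}_J$ whenever $l\in\mathfrak{C}_H$ because $H(x_1)\leq J(x_0,x_1)$.
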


We will prove this result at the end of this section. In the case of $\Omega=\X^2$, we have \prref{p1}, which will be useful in proving \thref{t1}. The proof \prref{p1} essentially follows \cite[Theorem 7]{Strassen} together with a measurable selection argument, and we provide it in the appendix for completeness. 

\begin{proposition}\label{p1}
Assume $\Omega=\X^2$, $\alpha\in\kP_G$ and $\beta\in\kP_H$. Moreover let $A\cap C$ be convex and $J$-closed. Then
$$A\cap B\cap C\neq\emptyset\ \Equiv\ \alpha(f^\Gamma)\leq\beta(f),\ \forall\,f\in \mathfrak{C}_H.$$
\end{proposition}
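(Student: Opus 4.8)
The plan is to prove \prref{p1} by reducing the constrained existence problem to the unconstrained Strassen theorem \cite[Theorem 7]{Strassen} applied to a carefully chosen intermediate marginal. The easy direction ($\Rightarrow$) is immediate: if $P\in A\cap B\cap C\neq\emptyset$, then for any $f\in\mathfrak{C}_H$ we have $\E^P[f(X_1)]=\beta(f)$ by the terminal marginal constraint, while the constraint $P\in C$ gives, after disintegrating $P$ over $X_0$, that $\E^P[f(X_1)\mid X_0=x]\geq f^\Gamma(x)$ by the very definition \eqref{e7} of $f^\Gamma$. Integrating against $\alpha=P\circ X_0^{-1}$ yields $\alpha(f^\Gamma)\leq\E^P[f(X_1)]=\beta(f)$, which is the right-hand side of the equivalence.

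For the hard direction ($\Leftarrow$), the idea is to produce a candidate ``pushed-forward initial marginal'' $\tilde\alpha\in\kP_H$ on $\X$ (a measure on the $X_1$-coordinate) that is dominated by $\beta$ in the convex order while being achievable by a constrained plan. Concretely, I would first show that the inequality $\alpha(f^\Gamma)\leq\beta(f)$ for all $f\in\mathfrak{C}_H$ forces the existence of some $P_0\in A\cap C$ whose terminal marginal $\alpha':=P_0\circ X_1^{-1}$ satisfies $\alpha'\preceq_{cx}\beta$ (smaller in convex order). The natural way to realize $\alpha'$ is via the measurable selector from \reref{r1}: using the Jankov--von Neumann theorem, for each target one can select $Q^*(x)\in\Gamma(x)$ that approaches the infimum defining $f^\Gamma$, and the map $x\mapsto f^\Gamma(x)$ is a lower semianalytic function so that $\alpha(f^\Gamma)$ is well-defined. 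The key structural input is the convexity and $J$-closedness of $A\cap C$: these guarantee that the set $(A\cap C)_1:=\{P\circ X_1^{-1}:\ P\in A\cap C\}$ is itself convex and $H$-closed in $\kP_H$, so it is a well-behaved target set to run a separation/Strassen argument against.

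The core of the argument is then a Hahn--Banach separation combined with Strassen. Suppose, for contradiction, that no $P_0\in A\cap C$ has terminal marginal $\preceq_{cx}\beta$; equivalently, the convex set $(A\cap C)_1$ does not meet the convex order cone below $\beta$. I would separate these two convex sets in the dual pairing of $\kP_H$ with $\mathfrak{C}_H$, producing a continuous function $f\in\mathfrak{C}_H$ (which may be taken convex after a truncation/regularization step, since the convex-order cone is polar to convex functions) such that $\inf_{P\in A\cap C}\E^P[f(X_1)]>\beta(f)$. But $\inf_{P\in A\cap C}\E^P[f(X_1)]=\alpha(f^\Gamma)$ by optimizing the conditional expectation coordinatewise over $\Gamma(x)$ and integrating against $\alpha$, contradicting the hypothesis $\alpha(f^\Gamma)\leq\beta(f)$. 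Hence some $P_0\in A\cap C$ has $\alpha'=P_0\circ X_1^{-1}\preceq_{cx}\beta$. Finally I apply \cite[Theorem 7]{Strassen} to extend: since $\alpha'\preceq_{cx}\beta$, there is a martingale coupling transporting $\alpha'$ to $\beta$, and pasting this coupling after $P_0$ (gluing on the $X_1$-coordinate) yields a measure in $A\cap B\cap C$.

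I expect the main obstacle to be the separation step and the accompanying measurable-selection bookkeeping. Verifying that $(A\cap C)_1$ is genuinely convex and $H$-closed requires care: closedness must be checked in the $J$/$H$-topology generated by \eqref{ee3}, and one must ensure that weak* limits of constrained plans remain in $C$, which is where the $J$-closedness hypothesis on $A\cap C$ is essential. The identity $\inf_{P\in A\cap C}\E^P[f(X_1)]=\alpha(f^\Gamma)$ also needs justification via a measurable selection of near-optimizers $Q(x)\in\Gamma(x)$ (using analyticity of $\Gr(\Gamma)$ and the Jankov--von Neumann theorem as in \reref{r1}), together with a check that $f^\Gamma$ is universally measurable and $\alpha$-integrable so that $\alpha(f^\Gamma)$ makes sense for $f\in\mathfrak{C}_H$. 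These measurability and integrability verifications, rather than the Strassen pasting itself, are the technical heart of the proof.
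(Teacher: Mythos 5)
Your ``$\Longrightarrow$'' direction is exactly the paper's. The ``$\Longleftarrow$'' direction, however, breaks at the final step, and the detour through the convex order is the source of the problem. \prref{p1} has no martingale structure: $\Gamma(x)$ is an arbitrary multifunction with analytic graph, and what you must produce is a measure $P\in A\cap C$ whose terminal marginal is \emph{exactly} $\beta$ with $P|_{X_0=x}\in\Gamma(x)$. Your plan produces $P_0\in A\cap C$ with terminal marginal $\alpha'\preceq_{cx}\beta$ and then pastes a martingale coupling from $\alpha'$ to $\beta$ on top. The pasted measure fails the constraint $C$: the composed conditional law of the final coordinate given $X_0=x$ is the composition of $P_0|_{X_0=x}$ with a martingale kernel, and nothing in the hypotheses says $\Gamma(x)$ is stable under such post-composition --- indeed $\Gamma(x)$ need not contain any martingale measures at all (e.g.\ the capacity-constraint example), so the glued plan generally leaves $C$. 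There is also a secondary gap: you separate two convex sets, $(A\cap C)_1$ and the convex-order cone below $\beta$, neither of which is compact, so Hahn--Banach strong separation is not directly available; and insisting that the separating functional be convex discards exactly the information you have, since the hypothesis $\alpha(f^\Gamma)\le\beta(f)$ holds for \emph{all} $f\in\mathfrak{C}_H$, not only convex ones.

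The repair is to aim directly at $\beta$ rather than at something below it in convex order, which is what the paper does: (i) establish $\alpha(f^\Gamma)=\inf_{P\in A\cap C}\E^P[f(X_1)]$ by measurable selection (you identified this correctly as a key identity); (ii) if $\beta$ were not in the $H$-closure of the convex set $\{P\circ X_1^{-1}:\ P\in A\cap C\}\cap\kP_H$, separating the \emph{point} $\beta$ from this convex set would yield some $f\in\mathfrak{C}_H$ with $\beta(f)<\inf_{P\in A\cap C}\E^P[f(X_1)]=\alpha(f^\Gamma)$, contradicting the hypothesis; (iii) take $P_n\in A\cap C$ with $P_n\circ X_1^{-1}\to\beta$, use relative $J$-compactness of $(P_n)$ and the $J$-closedness of $A\cap C$ to extract a limit $P_\infty\in A\cap C$ with $P_\infty\circ X_1^{-1}=\beta$. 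No pasting and no convex order are needed; Strassen's Theorem 7 enters only as the template for this separation-plus-compactness argument, not as the ``convex order implies martingale coupling'' theorem.
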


Let us discuss our assumptions in the following remarks. In what follows, we will give natural examples where these assumptions are satisfied.

\begin{remark}\label{r2}
The closedness of $A\cap C$ cannot imply the closedness of $(A\cap C)_{0,T}$. For instance, let $\Omega=\R^3$, $\alpha=\delta_0$,
$$\Gamma(x)=\{P\in\kP(\Omega^x):\ P\circ(X_1,X_2)^{-1}=\delta_{(x_1,x_2)}, \ (x_1,x_2)\in S\},$$
where $S=\{(x_1,x_2):\ x_1>0,x_2>0,x_1x_2\geq 1\}$. Then $A\cap C$ is weakly closed, but $(A\cap C)_{0,2}=\{\delta_0\otimes\delta_x:\ x>0\}$ is not.

Moreover, in the above theorem the assumption $(A\cap C)_{0,T}$ being closed cannot be replaced by $A\cap C$ being closed. Consider again the above with $\beta=\delta_0$. Then obviously $A\cap B\cap C=\emptyset$. However, for any continuous function $f$,
$$\alpha(f^\Gamma)=f^\Gamma(0)=\inf_{(x_1,x_2)\in S}f(x_2)\leq f(0)=\beta(f).$$
\end{remark}

\begin{remark}[\textbf{Financial interpretation}]\label{rmk:finan}
Suppose $\Gamma$ contains the martingale constraint, i.e.,
$$\Gamma(x)\subset\{Q\in\kP(\Omega^x):\ Q\text{ martingale measure}\},\quad x\in\X.$$
Suppose $X$ represents the stock price, and $f$ is the payoff of an option written on $X_T$. Assume $\alpha=\delta_x$.

Then $f^\Gamma(x)$ represents the sub-hedging price of the option $f$ given the current stock price $X_0=x$, and $\beta(f)$ is market price of $f$ (which is consistent with the vanilla option prices). Then the right-hand-side of \eqref{e1} means that the sub-hedging price is smaller than the market price. By symmetry, the super-hedging price is larger than the market price. On the other hand, the left-hand-side of \eqref{e1} means there is a measure consistent with the constraints. As a result, both sides of \eqref{e1} represent no arbitrage. For the role martingale optimal transport plays in finance see \cite{MR3066985}.
\end{remark}

The following lemma gives a useful sufficient condition for closedness of $(A \cap C)_{0,T}$. 
\begin{lemma}\label{l1}
Let $G=H=1$, so that the topology generated by $\eqref{ee3}$ is the weak topology in the usual sense. If $A\cap C$ is weakly compact, then $(A\cap C)_{0,T}$ is weakly closed.
\end{lemma}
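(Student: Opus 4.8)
The plan is to realize $(A\cap C)_{0,T}$ as the continuous image of the weakly compact set $A\cap C$ under the marginal-projection map, and then to invoke the elementary fact that in a Hausdorff space the continuous image of a compact set is compact, hence closed.

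First I would introduce the pushforward map $\pi:\kP(\Omega)\to\kP(\X^2)$ defined by $\pi(P):=P\circ(X_0,X_T)^{-1}$, so that by definition $(A\cap C)_{0,T}=\pi(A\cap C)$. The key step is to verify that $\pi$ is continuous when both spaces carry the weak topology. For this it suffices to show that the map $\omega\mapsto(X_0(\omega),X_T(\omega))$ is continuous from $\Omega$ into $\X^2$: granting this, for every bounded continuous $g$ on $\X^2$ the composition $\omega\mapsto g(X_0(\omega),X_T(\omega))$ is bounded and continuous on $\Omega$, whence $P\mapsto\E^P[g(X_0,X_T)]=\E^{\pi(P)}[g]$ is weakly continuous; since the weak topology on $\kP(\X^2)$ is generated by such integrals, $\pi$ is continuous.

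The one point requiring care is the continuity of the evaluation maps in each of the three admissible spaces. When $\Omega=\X^{N+1}$ the coordinate projections $X_0,X_T$ are trivially continuous. When $\Omega=C[0,1]$ with the uniform metric, evaluation at any fixed time is continuous, so $X_0,X_1$ are continuous. When $\Omega=D[0,1]$ with the Skorokhod metric, evaluation at an interior time need not be continuous, but evaluation at the two endpoints $t=0$ and $t=1$ is continuous; this is precisely why the relevant marginals are taken at times $0$ and $T=1$. Thus in all three cases $\omega\mapsto(X_0,X_T)$ is continuous and $\pi$ is weakly continuous.

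Finally, since $A\cap C$ is weakly compact and $\pi$ is weakly continuous, $(A\cap C)_{0,T}=\pi(A\cap C)$ is weakly compact. Because $\X$ is polish (a closed subset of $\R^d$ in the two path-space cases), $\X^2$ is polish and the weak topology on $\kP(\X^2)$ is metrizable, in particular Hausdorff; hence the compact set $(A\cap C)_{0,T}$ is weakly closed. I do not anticipate a genuine obstacle here, as the argument is purely soft-topological; the only step meriting attention is the Skorokhod-continuity of the endpoint evaluations noted above.
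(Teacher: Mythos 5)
Your proof is correct and is essentially the same argument as the paper's: the paper extracts a weakly convergent subsequence from a lifted sequence in $A\cap C$ and notes that its limit projects onto the limiting marginal, which is exactly the statement that the continuous image of a compact set is compact, hence closed. The only difference is that you make explicit the continuity of the endpoint evaluations $\omega\mapsto(X_0(\omega),X_T(\omega))$ (including the Skorokhod case), a point the paper passes over with ``obviously.''
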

\begin{proof}
Let $Q_n\in (A\cap C)_{0,T}$ such that $Q_n\xrightarrow{w}Q$ for some $Q\in\kP(\X^2)$. Then there exists $P_n\in A\cap C$ such that
$$P_n\circ(X_0,X_T)^{-1}=Q_n.$$
Since $A\cap C$ is weakly compact, there exists some $P\in A\cap C$ such that $P_n\xrightarrow{w} P$. Obviously $P\circ(X_0,X_T)^{-1}=Q$, and thus $Q\in (A\cap C)_{0,T}$.
\end{proof}

\subsection{Examples of volatility uncertainty}\label{subsec:volatility}
Our starting point is to consider $C$ as the set of martingale measures with volatility uncertainty. With some compact constraints on the volatility, we can show $A\cap C$ is indeed weakly compact and thus weakly closed (\leref{l1}). Here are some examples.

\begin{example}[Volatility uncertainty in one period]
Let $\X=\R^d$ and $\Omega=\X^2$. Assume $\alpha$ has a finite first moment (i.e., $\alpha(|x|)<\infty$), and let
\begin{equation}\label{e3}
\Gamma(x)=\left\{Q\in\kP(\Omega^x):\ \E^Q[X_T]=x,\ Q\{(x,y):\ |y-x|\leq a(x)\}=1\right\},
\end{equation}
where $a(\cdot)$ is a nonnegative, bounded and continuous function on $\X$.  It can be shown that $\Gr(\Gamma)$ is Borel measurable.

\begin{proposition}\label{prop1}
In this example, $A\cap C$ is convex and weakly compact.
\end{proposition}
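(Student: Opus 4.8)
The plan is to verify the two properties separately, after reformulating $A\cap C$ concretely. Here $X=(X_0,X_1)$ is the canonical process on $\X^2$, so $X_T=X_1$, and on $\Omega^x$ we have $X_0=x$. Unwinding the definitions, a measure $P$ lies in $A\cap C$ precisely when (i) $P\circ X_0^{-1}=\alpha$, (ii) $\E^P[X_1\mid X_0]=X_0$, and (iii) $P\{|X_1-X_0|\le a(X_0)\}=1$. Convexity is then routine, since each condition is affine in $P$: the map $P\mapsto P\circ X_0^{-1}$ is linear, the martingale condition is the linear constraint $\int g(X_0)(X_1-X_0)\,dP=0$ for all bounded continuous $g$, and the support condition $P(F)=1$ with $F:=\{(x,y):|y-x|\le a(x)\}$ is preserved under mixtures. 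Hence for $P_1,P_2\in A\cap C$ and $\lambda\in[0,1]$ the mixture $\lambda P_1+(1-\lambda)P_2$ again satisfies (i)--(iii).

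For weak compactness I would combine tightness with closedness. Set $M:=\sup_{x\in\X}a(x)<\infty$, which is finite since $a$ is bounded; condition (iii) forces $|X_1|\le|X_0|+M$ $P$-a.s. for every $P\in A\cap C$. Given $\eps>0$, tightness of the single measure $\alpha$ gives a compact $K\subset\X$ with $\alpha(K)>1-\eps$; putting $R:=M+\sup_{x\in K}|x|$, the set $\widetilde K:=K\times\{y:|y|\le R\}$ is compact in $\X^2$ and, for every $P\in A\cap C$,
\[
P(\widetilde K)\ge P(X_0\in K)=\alpha(K)>1-\eps .
\]
Thus $A\cap C$ is tight, and by Prokhorov's theorem it is relatively weakly compact.

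It then remains to show $A\cap C$ is weakly closed, so let $P_n\in A\cap C$ with $P_n\xrightarrow{w}P$. The first marginal passes to the limit because $P\mapsto P\circ X_0^{-1}$ is weakly continuous, giving $P\circ X_0^{-1}=\alpha$. Since $a$ is continuous, $F$ is closed, and the portmanteau theorem yields $P(F)\ge\limsup_n P_n(F)=1$, so (iii) holds for $P$ as well. The main obstacle is the martingale condition (ii), because the natural test integrand $g(X_0)(X_1-X_0)$ is unbounded on $\X^2$ and cannot be fed directly into weak convergence. The key is that the increment is uniformly bounded across the whole sequence: I would pick a continuous cutoff $\chi_R$ with $\chi_R\equiv1$ on $[0,R]$ and $\chi_R\equiv0$ on $[R+1,\infty)$, and for bounded continuous $g$ form the bounded continuous function $\Phi_R(x,y):=g(x)(y-x)\chi_R(|y-x|)$. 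For $R\ge M$ we have $\chi_R(|X_1-X_0|)=1$ both $P_n$-a.s. (for every $n$) and $P$-a.s., so $\int\Phi_R\,dP_n=\int g(X_0)(X_1-X_0)\,dP_n=0$ while $\int\Phi_R\,dP=\int g(X_0)(X_1-X_0)\,dP$. Weak convergence then gives
\[
\int g(X_0)(X_1-X_0)\,dP=\lim_n\int\Phi_R\,dP_n=0
\]
for every bounded continuous $g$.

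Finally I would upgrade this to the full martingale property by a functional monotone class argument: the bounded measurable $g$ with $\E^P[(X_1-X_0)g(X_0)]=0$ form a vector space, closed under bounded pointwise convergence (dominated convergence, using $|X_1-X_0|\le M$) and containing the multiplicative class of bounded continuous functions, which generates $\mathcal{B}(\X)$; hence it contains all bounded Borel $g$, i.e. $\E^P[X_1\mid X_0]=X_0$. The hypothesis $\alpha(|x|)<\infty$ ensures $X_0,X_1\in L^1(P)$, so that (ii) is meaningful. Therefore $P\in A\cap C$, so $A\cap C$ is weakly closed, and together with tightness this makes it weakly compact. (Via \leref{l1} this also yields that $(A\cap C)_{0,T}$ is weakly closed.)
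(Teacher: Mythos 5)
Your proof is correct and follows essentially the same route as the paper: convexity is immediate, tightness comes from the tightness of $\alpha$ plus the uniform bound $|X_1-X_0|\le\sup_z a(z)$, and closedness uses the portmanteau theorem for the support constraint and a bounded continuous truncation of $g(x)(y-x)$ to pass the martingale condition to the limit. The only (cosmetic) difference is your choice of cutoff $\chi_R(|y-x|)$, which equals $1$ a.s.\ under every measure involved and so gives the identity exactly, whereas the paper truncates near the compact set $K^{\eps}$ and removes the truncation by dominated convergence; your version is slightly cleaner, and your explicit monotone class step merely spells out what the paper leaves implicit.
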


\begin{proof}
Convexity is obvious. Now for any $\eps>0$, there exists a compact set $K\subset\X$ such that $\alpha(K)\geq 1-\eps$. Then for any $P\in A\cap C$,
$$P(X\in K^\eps)\geq 1-\eps,$$
where
$$K^\eps:=\left\{(x,y):\ x\in K, |y-x|\leq \sup_{z\in\X}{a(z)}\right\}$$
is a compact set in $\X^2$. Therefore, $A\cap C$ is tight and thus relatively compact by Prokhorov's theorem (see e.g. \cite[Theorem 3.5.13]{MR2767184}).

Assume $P_n\in A\cap C$ such that $P_n\overset{w}\rightarrow P$. Then  by the Portmanteau Theorem (see e.g. \cite[Theorem 1.2]{doi:10.1137/1101016}),
$$P(\{(x,y):\ |y-x|\leq a(x)\})\geq\limsup_{n\rightarrow\infty} P_n(\{(x,y):\ |y-x|\leq a(x)\})=1.$$
Now, let us show the martingale property under the limiting measure.
Let $g$ be any continuous and bounded function on $\X$. Define the compact subset $U^{\epsilon}:=\{(x,y) \in \X^2 : d((x,y),K^{\epsilon}) \leq \epsilon\}$. Let $f^{\epsilon}$ be a continuous function on $\X^2$ such that $0\leq f^{\epsilon} \leq 1$, $f^{\epsilon}$ is compactly supported by $U^{\epsilon}$ and $f^{\epsilon}|_{K^{\epsilon}}=1$. Since $|X_1-X_0|\leq\sup_{z\in\X}a(z)<\infty$ $P_n$-a.s. and $P$-a.s., the function $(x,y) \mapsto (y-x)g(x)f^{\epsilon}(x,y)$ is continuous and bounded for any $\epsilon >0$. According to the definition of weak convergence, we have that
$$\E^P[(X_1-X_0)g(X_0)f^{\epsilon}(X_0,X_1)]=\lim_{n\rightarrow\infty}\E^{P_n}[(X_1-X_0)g(X_0)f^{\epsilon}(X_0,X_1)]=0.$$
As the random variable $|(X_1-X_0)g(X_0)|$ is bounded $P$-a.s., we can conclude by the dominated convergence theorem, 
$$\E^P[(X_1-X_0)g(X_0)]=\lim\limits_{\epsilon \to 0}\E^P[(X_1-X_0)g(X_0)f^{\epsilon}(X_0,X_1)]=0. $$
This implies $P$ is a martingale measure. As a result, $P\in A\cap C$, and thus $A\cap C$ is weakly compact.
\end{proof}

With $\Gamma$ defined in \eqref{e3}, it can be shown that for any function $f:\,\X\mapsto\R$,
\begin{equation*}
\begin{aligned}
f^\Gamma(x)&=\cC(f|_{\bar O(x,a(x))})(x) \\
= &\inf\left\{\sum_{i=0}^d\lambda_i f(y_i):\ |y_i-x|\leq a(x), \lambda_i\geq 0, i=0,\dotso,d,\sum_{i=0}^d\lambda_i=1, \sum_{i=0}^d \lambda_i y_i=x \right\},
\end{aligned}
\end{equation*}
where $\cC(f|_{\bar O(x,b)})(x)$ is given by the convex envelope of $f$ restricted on $\bar O(x,b):=\{y\in\X:\ |y-x|\leq b\}$ and then evaluating at $x$.
\end{example}

\begin{example}[Volatility uncertainty in multiple periods]
Let $\X=\R^d$ and $\Omega=\X^{N+1}$, $N\geq 1$. Assume $\alpha$ has a finite first moment, and let
\begin{equation*}
\Gamma(x)=\left\{ Q\in\kP(\Omega^x):
\begin{aligned}
&\ Q\text{ martingale measure}, \\
& \ Q\{ |X_n-X_{n-1}|\leq a_{n-1}(X_{n-1})\}=1,\ n=1,\dotso,N
\end{aligned}
\right\},
\end{equation*}

where $a_{n-1}$ is a nonnegative, bounded and continuous function on $\X$ for $n=1,\dotso,N$.  
\end{example}
\begin{proposition}
In this example $A\cap C$ is convex and weakly compact, and $f^\Gamma$ can be calculated recursively as follows:
$$g_N=f,\quad g_{n-1}(x)=\cC(g_n|_{\bar O(x,a_{n-1}(x))})(x),\ n=1,\dotso,N,\quad f^\Gamma=g_0.$$
\end{proposition}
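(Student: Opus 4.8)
The plan is to treat the two assertions in turn, establishing convexity and weak compactness by adapting the argument for \prref{prop1} to $N$ periods, and then proving the recursive formula by backward induction on top of the single-period convex-envelope computation from the previous example. Convexity of $A\cap C$ is again immediate. For weak compactness I would first prove tightness: given $\eps>0$, choose a compact $K\subset\X$ with $\alpha(K)\geq 1-\eps$. Since $|X_n-X_{n-1}|\leq a_{n-1}(X_{n-1})\leq\sup_z a_{n-1}(z)=:M_{n-1}<\infty$ for each $n$, every path started in $K$ stays in the fixed compact set $K^\eps:=\{(x_0,\dots,x_N):\ x_0\in K,\ |x_n-x_{n-1}|\leq M_{n-1}\}$, so $P(X\in K^\eps)\geq 1-\eps$ for all $P\in A\cap C$, and Prokhorov's theorem yields relative compactness. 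To see that $A\cap C$ is closed under weak limits, take $P_n\in A\cap C$ with $P_n\xrightarrow{w}P$. The marginal constraint $P\circ X_0^{-1}=\alpha$ passes to the limit, and each increment bound survives by the Portmanteau theorem applied to the closed set $\{|X_n-X_{n-1}|\leq a_{n-1}(X_{n-1})\}$. For the martingale property I would show $\E^P[(X_n-X_{n-1})g(X_0,\dots,X_{n-1})]=0$ for every bounded continuous $g$ and every $n$, exactly as in \prref{prop1}: multiply by a compactly supported cutoff equal to $1$ on the (compact) support of the increment, pass to the limit using boundedness of the increments, then remove the cutoff by dominated convergence. This gives $P\in A\cap C$, so $A\cap C$ is weakly compact.

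For the recursion, write $f^\Gamma(x)=\inf_{Q\in\Gamma(x)}\E^Q[f(X_N)]$. The lower bound $f^\Gamma\geq g_0$ follows by disintegrating any $Q\in\Gamma(x)$ and conditioning on $\mathcal{F}_{N-1}$: given $X_{N-1}=y$, the conditional law of $X_N$ is a one-step martingale measure supported in $\bar O(y,a_{N-1}(y))$, so by the definition of the convex envelope $\E^Q[f(X_N)\mid\mathcal{F}_{N-1}]\geq\cC(f|_{\bar O(X_{N-1},a_{N-1}(X_{N-1}))})(X_{N-1})=g_{N-1}(X_{N-1})$. Taking expectations and iterating this estimate backward through $n=N,\dots,1$ gives $\E^Q[f(X_N)]\geq g_0(x)$, and taking the infimum over $Q$ yields $f^\Gamma(x)\geq g_0(x)$.

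For the matching upper bound $f^\Gamma\leq g_0$ I would construct approximately optimal measures by pasting one-step kernels. By Carathéodory's theorem the convex envelope at $y$ is attained by measures supported on at most $d+1$ points of $\bar O(y,a_{n-1}(y))$ with barycenter $y$; using a measurable selection argument (Jankov--von Neumann, as in \reref{r1}) I would choose, for each $n$, a universally measurable kernel $\kappa_n(y,\cdot)$ that is a one-step martingale kernel supported in $\bar O(y,a_{n-1}(y))$ with $\int g_n\,d\kappa_n(y,\cdot)\leq g_{n-1}(y)+\eps$. Concatenating $\kappa_1,\dots,\kappa_N$ starting from $\delta_x$ produces $Q\in\Gamma(x)$ with $\E^Q[f(X_N)]\leq g_0(x)+N\eps$, and letting $\eps\downarrow 0$ gives $f^\Gamma(x)\leq g_0(x)$.

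The main obstacle is the upper bound: one must verify that each $g_n$ is measurable—so that the iterated convex envelopes are well defined as Borel functions and the pasted kernels integrate correctly—and then carry out the measurable selection of the minimizing one-step kernels so that the concatenation is a genuine element of $\Gamma(x)$. Justifying that optimizing period-by-period recovers the global infimum, i.e.\ interchanging the infimum with the conditional expectation at each stage, is the technical heart of the argument, and is precisely where the analyticity of $\Gr(\Gamma)$ and the Jankov--von Neumann selection theorem enter.
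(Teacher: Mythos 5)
Your proof of convexity and weak compactness follows essentially the same route as the paper's, which simply declares the argument ``similar to \prref{prop1}'' and only details the one new point, namely the martingale property at steps $n\geq 2$. There the paper proceeds differently from you: it sets $\alpha_1:=P\circ X_1^{-1}$, notes that $\alpha_1$ has a finite first moment because the first increment is bounded by $\sup_{z}a_0(z)$, and reruns the one-period argument with $\alpha$ replaced by $\alpha_1$ to obtain $\E^P[X_2\mid X_1]=X_1$, then inducts. Your version, which tests against bounded continuous functions of the whole past $(X_0,\dotso,X_{n-1})$, is in fact the more faithful one, since a martingale measure must satisfy $\E^Q[X_n\mid X_0,\dotso,X_{n-1}]=X_{n-1}$ and conditioning on $X_{n-1}$ alone yields only a weaker property; one small refinement is to take the cutoff as a function of the increment $x_n-x_{n-1}$ only, equal to $1$ on the ball of radius $\sup_z a_{n-1}(z)$, so that the truncated expectation under each $P_n$ vanishes exactly rather than up to an error term. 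As for the recursion $f^\Gamma=g_0$, the paper states it without any proof, so your two-sided dynamic-programming argument --- Jensen against the convex envelope for the lower bound, Carath\'eodory plus a Jankov--von Neumann selection of $\eps$-optimal one-step kernels for the upper bound --- is genuinely additional content; it is correct in outline, and you rightly identify the measurability of the iterated envelopes $g_n$ as the step needing care (continuity of $a_{n-1}$ and compactness of the balls keep the $g_n$ at least semicontinuous when $f$ is continuous, so the selection argument goes through).
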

\begin{proof}
The proof is similar to \prref{prop1}. It only remains to show $\E^P[X_n|X_{n-1}]=X_{n-1}$ for $n=2, \dotso, N$. Let us show that $\E^P[X_2|X_1]=X_1$, and the rest can be proved by induction. Denote by $\alpha_1$ the distribution of $X_1$ under $P$. Since $|X_1-X_0| \leq \max\limits_{z \in \X} a_0(z)< +\infty$, $\alpha_1$ has finite first moment. Replacing $\alpha$ with $\alpha_1$ in the proof of \prref{prop1}, we directly obtain that $\E^P[(X_2-X_1)g(X_1)]=0$ for any bounded continuous function $g$, which implies that $\E^P[X_2|X_1]=X_1$.
\end{proof}

\begin{example}[Volatility uncertainty in continuous time]\label{ex3}
Let $\X=\R^d$ and $\Omega=C[0,1]$. Assume $\alpha$ has a finite first moment, and let
$$\Gamma(x)=\left\{Q\in\kP(\Omega^x):\ Q \text{ martingale measure},\ \frac{d\langle X\rangle_t}{dt}\in\mathbb{D},\  dt\times Q\text{-a.e.}\right\},$$
where $\mathbb{D}\subset\R^{d\times d}$ is some fixed convex and compact set of matrices. In this case, $f^\Gamma$ is the $G$-expectation of $f$ (see \cite{Peng}).

\begin{proposition}
In this example, $A\cap C$ is convex and weakly compact.
\end{proposition}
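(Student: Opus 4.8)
The plan is to establish convexity and weak compactness of $A\cap C$ in the continuous-time setting, mirroring the structure of the proof of \prref{prop1} but replacing the one-step increment bound by the quadratic-variation constraint. Convexity is again immediate: if $P_1,P_2\in A\cap C$ and $t\in[0,1]$, then $tP_1+(1-t)P_2$ still has initial marginal $\alpha$, and the martingale property together with the constraint $\frac{d\langle X\rangle_s}{ds}\in\mathbb{D}$ holding $ds\times P_i$-a.e. passes to the convex combination (the set $\mathbb{D}$ is convex and the disintegration condition is linear in $P$). So the substance is compactness, which I would split into tightness plus closedness.

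For tightness, the key estimate comes from the compactness of $\mathbb{D}$: since every matrix in $\mathbb{D}$ has bounded operator norm, say by some constant $M$, each $P\in A\cap C$ makes $X$ a martingale with $\E^P[\langle X\rangle_1]\leq dM$, so the increments of $X$ are uniformly controlled in $L^2$. Combined with $\alpha$ having finite first moment, I would invoke a modulus-of-continuity criterion (e.g. the Kolmogorov--Centsov or Aldous-type tightness criterion for continuous martingales) to show $A\cap C$ is tight on $C[0,1]$, hence relatively weakly compact by Prokhorov. More carefully, the uniform bound on the quadratic variation density yields, via the Burkholder--Davis--Gundy inequality, uniform fourth-moment bounds on increments of the form $\E^P[|X_t-X_s|^4]\leq c\,|t-s|^2$, which gives the equicontinuity needed for tightness in the uniform topology.

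The main obstacle is closedness under weak limits: given $P_n\in A\cap C$ with $P_n\xrightarrow{w}P$, I must show both that $P$ is a martingale measure and that its quadratic-variation density still lies in $\mathbb{D}$ almost everywhere. The martingale property is the easier half and follows as in \prref{prop1}: for bounded continuous $g$ and $s<t$, the map $\omega\mapsto (X_t-X_s)\,g(X_{r_1},\dots,X_{r_k})$ is continuous and, after the uniform $L^2$ (hence uniform integrability) bound, the martingale identity $\E^{P_n}[(X_t-X_s)g(\cdots)]=0$ passes to the limit. The genuinely delicate point is the volatility constraint, because $\langle X\rangle$ is not a continuous functional of $\omega$ on $C[0,1]$, so one cannot read it off directly from weak convergence. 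The plan here is to characterize the constraint $\frac{d\langle X\rangle_t}{dt}\in\mathbb{D}$ dually: for each symmetric matrix $\Theta$ in the polar/support description of $\mathbb{D}$, the process $\mathrm{tr}(\Theta\, X_tX_t^{\top}) - \int_0^t h_{\mathbb{D}}(\Theta)\,ds$ (with $h_{\mathbb{D}}$ the support function) is a $P$-submartingale precisely when the constraint holds, and these submartingale inequalities are expressible through expectations of bounded continuous test functionals that survive the passage to the weak limit.

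Concretely, I would show that membership in $\mathbb{D}$ of the density is equivalent to a countable family of submartingale conditions indexed by a dense set of directions $\Theta$, each of the form $\E^P[(\mathrm{tr}(\Theta X_tX_t^\top)-\mathrm{tr}(\Theta X_sX_s^\top)-h_{\mathbb{D}}(\Theta)(t-s))\,\varphi]\geq 0$ for nonnegative bounded continuous $\mathcal{F}_s$-measurable $\varphi$; using the uniform quadratic-variation bound to justify uniform integrability of the quadratic terms $X_tX_t^\top$, each such inequality is preserved under $P_n\xrightarrow{w}P$. Since $\mathbb{D}$ is closed and convex, it equals the intersection of the half-spaces $\{\langle X\rangle\text{-density}:\mathrm{tr}(\Theta\cdot)\leq h_{\mathbb{D}}(\Theta)\}$ over this dense family, so $P$ inherits the constraint and $P\in A\cap C$. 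This establishes that $A\cap C$ is weakly closed and, with tightness, weakly compact, completing the proof.
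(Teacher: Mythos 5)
Your convexity and tightness arguments coincide with the paper's: the paper also controls $X_0$ via $\alpha$ and derives $\sup_{P\in A\cap C}\E^P[|X_t-X_s|^4]\leq K|t-s|^2$ from the Burkholder--Davis--Gundy inequality and the boundedness of $\mathbb{D}$, then invokes the moment criterion for tightness on $C[0,1]$. Where you diverge is the closedness step: the paper simply says ``obviously $P\in A$'' and then cites the argument of Nutz et al.\ (their Lemma 3.2) to get $P\in C$, whereas you spell out a proof via a countable family of martingale-type inequalities indexed by directions $\Theta$, using that the compact convex set $\mathbb{D}$ is an intersection of half-spaces $\{\mathrm{tr}(\Theta\,\cdot)\leq h_{\mathbb{D}}(\Theta)\}$. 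This is in the same spirit as the cited lemma, so I would call it the same route with the black box opened; what it buys is a self-contained proof, at the cost of having to handle the analytic details yourself.

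Two of those details are off as written. First, a direction error: with $h_{\mathbb{D}}(\Theta)=\sup_{A\in\mathbb{D}}\mathrm{tr}(\Theta A)$ the drift of $\mathrm{tr}(\Theta X_tX_t^{\top})$ is $\mathrm{tr}(\Theta\,\tfrac{d\langle X\rangle_t}{dt})\leq h_{\mathbb{D}}(\Theta)$, so $\mathrm{tr}(\Theta X_tX_t^{\top})-h_{\mathbb{D}}(\Theta)t$ is a \emph{supermartingale} and the test inequality should read $\E^P[(\cdots)\varphi]\leq 0$, not $\geq 0$; as stated, the family of conditions you pass to the limit does not characterize $\mathbb{D}$. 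Second, $\alpha$ is only assumed to have a finite first moment, so $X_tX_t^{\top}$ need not be integrable under $P$ and the displayed expectations may be undefined; you should work with $Y_t:=X_t-X_0$ (which has the same bracket and satisfies $\E^P[|Y_t|^2]\leq\mathrm{const}$) or with conditional increments. Relatedly, these test functionals are not bounded continuous functions of the path, so the passage to the weak limit genuinely requires the uniform integrability you mention (supplied by the uniform fourth-moment bound) together with a truncation of the $\mathcal{F}_s$-measurable factor $\varphi$. Both points are routine to repair and do not affect the overall strategy, but the argument as literally written does not go through.
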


\begin{proof}
First, we show $A\cap C$ is tight. We have
$$\lim_{L\rightarrow\infty}\sup_{P\in A\cap C}P(|X_0|>L)=\lim_{L\rightarrow\infty}\alpha\left(\{x: |x|>L \}\right)=0.$$
Moreover, for any $s,t\in[0,1]$, since $\mathbb{D}$ is bounded, by the Burkholder-Davis-Gundy inequality (see e.g. \cite[Theorem 3.3.28]{MR1121940}) there exists some constant $K$ independent of $s$ and $t$ such that
\begin{equation}
\sup_{P\in A\cap C}\E^P[|X_t-X_s|^4]\leq\sup_{P\in A\cap C}\E^P\left[\E^P\left[\sup_{s\leq r\leq t}|X_r-X_s|^4\Big|X_s\right]\right]\leq K|t-s|^2.
\end{equation}
By the moment criterion, $A\cap C$ is tight (see e.g. \cite[Problem 2.4.11]{MR1121940}).

Next we show $A\cap C$ is closed. Let $P^n\in A\cap C$ such that $P^n\overset{w}\rightarrow P$. Obviously $P\in A$. Then using almost the same argument as in the proof of \cite[Lemma 3.2]{Nutz3}, we can show that $P\in C$.
\end{proof}

\end{example}

\subsection{Example of capacity constraint}\label{subsec:capacity}
 In \cite{MR3286490}, Korman and McCann studied the optimal transport problem with capacity constraints. Suppose $f$ and $g$ are two probability density functions on $\mathbb{R}^d$, $c$ is a cost function on $\mathbb{R}^d \times \mathbb{R}^d$, and $\bar{h} \in L^{\infty}(\mathbb{R}^d \times \mathbb{R}^d)$ is a capacity constraint. Define $\Gamma^{\bar{h}}(f,g):=\{h \in L^1(\mathbb{R}^d \times \mathbb{R}^d):  \text{ $h$ has $f,g$ as its marginals, and } h \leq \bar{h} \} $. Under the assumption $\Gamma^{\bar{h}}(f,g) \not = \emptyset$, Korman and McCann proved that any optimizer $h_0$ of the problem,
\begin{align*}
\inf_{h \in \Gamma^{\bar{h}}(f,g)} \int c(x,y)h(x,y) dx dy, 
\end{align*}
is geometrically extreme, i.e., $h_0= \mathds{1}_{W} \bar{h}$ for some measurable set $W \subset \mathbb{R}^d \times \mathbb{R}^d$. 

In this subsection, we give one more criterion for weak closedness of $(A \cap C)_{0,T}$. In doing so, we can apply \thref{t1} and describe when this non-emptiness assumption $\Gamma^{\bar{h}}(f,g) \not = \emptyset$ is satisfied. Actually  we can deal with more general capacity constraints.

Let $R:\X\mapsto\kP(\Omega)$ be a transition kernel, and
$$\Gamma(x):=\left\{Q\in\kP(\Omega^x):\ \frac{Q(dy)}{R(x,dy)}\leq a(x,y)\right\},$$
where $a(\cdot,\cdot)\geq 0$ is a bounded and Borel measurable function. For any Borel measurable set $A\in\mathcal{B}(\Omega)$, according to \cite[Lemma 4.6]{MR3705159}, the function $Q \mapsto \mathbb{E}^Q[1_A]$ is Borel measurable. Since the function $x \mapsto \int_\Omega 1_A a(x,y)R(x,dy)$ is also Borel measurable, so is the set
$$L_A:=\left\{(x,Q)\in\mathbb{X}\times\mathfrak{P}(\Omega):\ \mathbb{E}^Q[1_A]\leq\int_\Omega 1_A a(x,y)R(x,dy)\right\}.$$
It can be easily checked that $ \{(x,Q):\ \ Q\in \mathfrak{P}(\Omega^x)\}$ is closed, and hence the set 
$$\mathcal{L}_A:=L_A\cap \{(x,Q):\ \ Q\in \mathfrak{P}(\Omega^x)\}$$
is Borel measurable. Now let $(A_i)_{i=1}^\infty$ be a countable algebra generating $\mathcal{B}(\Omega)$. Then
$$\text{Gr}(\Gamma)=\cap_{i=1}^\infty\mathcal{L}_{A_i}$$
is Borel measurable, and hence analytic. 

\begin{proposition}
In this example, $A\cap C$ is weakly compact, and thus $(A\cap C)_{0,T}$ is weakly closed.
\end{proposition}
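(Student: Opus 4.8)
The plan is to prove that $A\cap C$ is weakly compact; the weak closedness of $(A\cap C)_{0,T}$ then follows at once from \leref{l1}. Weak compactness will be obtained in the usual two steps — tightness (hence relative compactness by Prokhorov's theorem) and weak closedness — and the engine behind both is a single finite dominating measure. I would define the Borel measure $\nu$ on $\Omega$ by
\begin{equation*}
\nu(B):=\int_{\X}\int_{\Omega}1_B(y)\,a(x,y)\,R(x,dy)\,\alpha(dx),\qquad B\in\mathcal{B}(\Omega).
\end{equation*}
Since $a$ is bounded and $R(x,\cdot),\alpha$ are probability measures, $\nu(\Omega)\le\sup a<\infty$, so $\nu$ is a finite measure.

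The heart of the argument is the characterization
\begin{equation*}
A\cap C=\{P\in\kP(\Omega):\ P\circ X_0^{-1}=\alpha,\ P\le\nu\},
\end{equation*}
where $P\le\nu$ means $P(B)\le\nu(B)$ for every $B\in\mathcal{B}(\Omega)$. The inclusion ``$\subset$'' is immediate: disintegrating $P\in A\cap C$ over $X_0$ as $P(dy)=\alpha(dx)\,Q_x(dy)$ with $Q_x\in\Gamma(x)$, the capacity constraint $Q_x(B)\le\int_B a(x,y)R(x,dy)$ integrates against $\alpha$ to give $P\le\nu$. For ``$\supset$'', suppose $P\circ X_0^{-1}=\alpha$ and $P\le\nu$, and disintegrate $P(dy)=\alpha(dx)\,Q_x(dy)$. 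Writing $\kappa_x(dy):=a(x,y)R(x,dy)$ and using that $R(x,\cdot)$ is concentrated on $\Omega^x$ (so that $\frac{Q(dy)}{R(x,dy)}$ is meaningful for $Q\in\kP(\Omega^x)$), testing $P\le\nu$ against the sets $\{y_0\in A,\ y\in A_i\}$ gives $\int_A Q_x(A_i)\,\alpha(dx)\le\int_A \kappa_x(A_i)\,\alpha(dx)$ for every Borel $A\subset\X$ and each $i$, where $(A_i)$ is the countable algebra generating $\mathcal{B}(\Omega)$ already used for the analyticity of $\Gr(\Gamma)$. Hence, off a single $\alpha$-null set, $Q_x(A_i)\le\kappa_x(A_i)$ for all $i$; since $\kappa_x-Q_x$ is then a finite signed measure that is nonnegative on an algebra generating $\mathcal{B}(\Omega)$, a total-variation approximation shows it is nonnegative throughout, i.e.\ $Q_x\in\Gamma(x)$ for $\alpha$-a.e.\ $x$ and $P\in C$.

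With this characterization in hand both properties are short. For tightness, inner regularity of the finite measure $\nu$ on the Polish space $\Omega$ gives, for each $\eps>0$, a compact $K$ with $\nu(\Omega\setminus K)<\eps$; then $P(\Omega\setminus K)\le\nu(\Omega\setminus K)<\eps$ uniformly over $P\in A\cap C$, so $A\cap C$ is tight and relatively compact. For weak closedness, take $P_n\in A\cap C$ with $P_n\overset{w}\rightarrow P$. The evaluation $X_0$ is continuous on each of the three choices of $\Omega$ (time $0$ is always a continuity point in the Skorokhod topology), so $P_n\circ X_0^{-1}=\alpha$ passes to the limit and $P\in A$. For the domination, the Portmanteau theorem gives $P(U)\le\liminf_n P_n(U)\le\nu(U)$ for every open $U$, and outer regularity of the finite measures $P$ and $\nu$ upgrades this to $P(B)\le\nu(B)$ for all Borel $B$; thus $P\le\nu$, and $P\in A\cap C$ by the characterization. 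Hence $A\cap C$ is weakly compact, and \leref{l1} yields that $(A\cap C)_{0,T}$ is weakly closed.

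The only genuinely delicate step is the converse inclusion in the characterization: converting the aggregate domination $P\le\nu$ into the per-fibre constraint $Q_x\in\Gamma(x)$ holding for $\alpha$-a.e.\ $x$ along a \emph{single} exceptional null set. This is where the countable generating algebra $(A_i)$ is essential, together with the extension of a measure inequality from a generating algebra to all of $\mathcal{B}(\Omega)$; the same regularity idea is what lets the Portmanteau inequality on open sets be promoted to domination on every Borel set in the closedness step. Everything else — the construction of $\nu$, the ``$\subset$'' inclusion, tightness, and preservation of the initial marginal — is routine.
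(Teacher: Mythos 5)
Your strategy is genuinely different from the paper's: you dominate all of $A\cap C$ by a single finite measure $\nu$ and read off tightness and weak closedness from inner/outer regularity and the Portmanteau theorem, whereas the paper lifts everything to $\kP(\X\times\Omega)$, writes each admissible kernel as a density $b_n\le a$ with respect to $\alpha\times R$, and extracts a limiting density via weak compactness of bounded sets in $L^2(\alpha\times R)$ (Banach--Alaoglu plus reflexivity). Your route avoids the functional-analytic step entirely and is more elementary; the tightness step, the preservation of the initial marginal, and the upgrade of the open-set Portmanteau inequality to setwise domination are all correct as written.

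However, the characterization $A\cap C=\{P:\ P\circ X_0^{-1}=\alpha,\ P\le\nu\}$ — the heart of your argument — is false in the generality of the paper's setup. The paper only assumes $R:\X\to\kP(\Omega)$ is a transition kernel; it does \emph{not} assume $R(x,\cdot)$ is concentrated on $\Omega^x$, and that hypothesis, which you insert parenthetically, is exactly what your ``$\supset$'' direction needs. Concretely, take $\X=\{0,1\}$, $\Omega=\X^2$, $\alpha=\tfrac12(\delta_0+\delta_1)$, $a\equiv 2$, and $R(0,\cdot)=R(1,\cdot)$ uniform on the four points of $\Omega$. Then $\Gamma(x)$ consists of the single measure putting mass $\tfrac12$ on each of the two points of $\Omega^x$, while your $\nu$ puts mass $\tfrac12$ on each of the four points of $\Omega$; the measure $P=\tfrac12\delta_{(0,0)}+\tfrac12\delta_{(1,1)}$ satisfies $P\circ X_0^{-1}=\alpha$ and $P\le\nu$, yet $P|_{X_0=0}=\delta_{(0,0)}\notin\Gamma(0)$, so $P\notin C$. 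The defect is that $\nu$, built from the full $R(x,\cdot)$, credits each fibre with mass that $\Gamma(x)$ cannot actually use. The repair is cheap: define $\nu(B):=\int_\X\kappa_x(B\cap\Omega^x)\,\alpha(dx)$ with $\kappa_x(dy):=a(x,y)R(x,dy)$ (equivalently, replace $R(x,\cdot)$ by its restriction to $\Omega^x$, which does not change $\Gamma$), after which your argument — including the passage from the countable generating algebra to all Borel sets via a monotone-class argument — goes through. The paper's $L^2$ proof needs no such repair because the constraint is imposed fibrewise on the densities from the outset.
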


\begin{proof}
By the boundedness of $a(\cdot,\cdot)$, the subset of $\kP(\X \times \Omega)$
$$\Lambda :=\{\alpha \times Q:\ Q \text{\ is \ any \ transition \ kernel \ such that \ }Q(\cdot)\in\Gamma(\cdot)\ \alpha\text{-a.s.}\}$$
is relatively compact. If we can show $\Lambda$ is weakly compact, then the subset of $\kP(\Omega)$, $$A \cap C=\{ \bar{P} \circ \pi_2^{-1}: \bar{P} \in \Lambda \text{ \ and \ } \pi_2(x,y):=y, \ \forall \ (x,y) \in \X \times \Omega \},$$
is also weakly compact. 
Take $\alpha \times Q_n \in \Lambda$ such that $\alpha \times Q_n \xrightarrow{w} \bar{P}^*$.
By the definition of $\Gamma(x)$, there exist Borel measurable functions $b_n$ with $0\leq b_n(\cdot,\cdot)\leq a(\cdot,\cdot)$ such that for $(x,y)\in\X\times\Omega$,
\begin{equation*}
b_n(x,y)R(x,dy)=Q_n(x,y).
\end{equation*}

Consider $L^2(\X \times \Omega)$ over the probability space $(\X \times \Omega, \alpha \times R)$. Since $L^2$ is reflexive, the weak* topology and weak topology coincide. Now because $b_n$ is uniformly bounded, by Banach-Alaoglu theorem (see e.g. \cite[Theorem17.4]{MR0394084}), there exists a Borel measurable function $b^*$ on $\X\times\Omega$ such that $b_n \xrightarrow{w} b^*$, i.e., 
for any measurable function $f$ on $\X \times \Omega$ with $\E^{\alpha \times R}|f|^2<\infty$, 
\begin{equation}\label{ee1}
\E^{\alpha \times R}[fb_n]\rightarrow\E^{\alpha \times R}[fb^*].
\end{equation}
In particular, the above holds for bounded and continuous functions $f$, which implies that
$$\alpha \times b_nR=\alpha \times Q_n \xrightarrow{w} \alpha \times b^*R.$$
So we conclude $\alpha \times b^*R=\bar{P}^*$. 

Note that for any bounded, nonnegative, and measurable function $f$,
$$\E^{\alpha \times R}[fb_n]\leq\E^{\alpha \times R}[fa].$$
By \eqref{ee1},
$$\E^{\alpha \times R}[fb^*]\leq\E^{\alpha \times R}[fa].$$
This implies that $b^*\leq a$, $\alpha \times R$-a.s., and thus $\bar{P}^*= \alpha \times b^*R \in  \Lambda$.
\end{proof}

\subsection{Application to Skorokhod embedding with bounded times}\label{subsec:skorokhod}

\thref{t1} and \exref{ex3} provide a necessary and sufficient condition for the existence of a Skorokhod embedding in bounded time. We will rely on a time change argument to make a connection to Skorohod embedding; see e.g. Hobson \cite{MR2762363}. To wit, let $\Omega=C[0,1]$ with $\X=\R$. Let $\alpha,\beta\in\kP(\X)$ with finite first moments and $\sigma>0$ be a constant.  For $u,r>0$, define
$$\mathcal{Q}_{u,r}:=\left\{Q\in\kP(\bar\Omega):\ Q\text{ martingale measure},\ \frac{d\langle \bar X\rangle_t}{dt}\leq u,\ 0\leq t\leq r,\ dt\times Q\text{-a.e.}\right\},$$
where $\bar\Omega:=C_0[0,\infty)$ is the set of continuous paths $[0,\infty) \to\X$ starting from position $0$, and $\bar X$ is the canonical process on $\overline\Omega$. For any function $f \in \mathfrak{C}$ and $u,r>0$, define
$$f^{u,r}(x):=\inf_{Q\in\mathcal{Q}_{u,r}}\E^Q[f(x+\bar X_r)].$$
We have the following.
\begin{proposition}
For Brownian motion $\kB$ with initial distribution $\kB_0\overset{d}=\alpha$, there exists a stopping time $\tau$ such that
$$\tau\leq\sigma\quad\text{and}\quad \kB_\tau\overset{d}=\beta,$$
if and only if for any $f\in\mathfrak{C}$,
$$\alpha(f^{\sigma,1})\leq\beta(f).$$
\end{proposition}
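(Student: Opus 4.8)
The plan is to recognize $\mathcal{Q}_{\sigma,1}$ as the volatility-uncertainty family of \exref{ex3} and then read off the stated inequality from \thref{t1}. Writing a martingale on $C[0,1]$ started at $x$ as $X=x+\bar X$ with $\bar X_0=0$, the constraint set $\Gamma(x)$ of \exref{ex3} (with $d=1$ and $\mathbb{D}=[0,\sigma]$) is in bijection with $\mathcal{Q}_{\sigma,1}$, and $\E^Q[f(X_1)]=\E^Q[f(x+\bar X_1)]$, so that $f^\Gamma(x)=f^{\sigma,1}(x)$ for every $f\in\mathfrak{C}$. Choosing $G=H=1$ (the usual weak topology), \exref{ex3} gives that $A\cap C$ is convex and weakly compact, so by \leref{l1} the set $(A\cap C)_{0,T}$ is convex and weakly closed; the hypotheses of \thref{t1} thus hold and yield
\begin{equation*}
A\cap B\cap C\neq\emptyset\ \Equiv\ \alpha(f^{\sigma,1})\leq\beta(f),\quad\forall\,f\in\mathfrak{C}.
\end{equation*}
It then remains to show that $A\cap B\cap C\neq\emptyset$ is equivalent to the existence of the stopping time $\tau$, which is a time-change equivalence in the spirit of \cite{MR2762363}.

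For the implication from an embedding to a martingale measure, I would take $\kB$ with $\kB_0\overset{d}{=}\alpha$ and a stopping time $\tau\leq\sigma$ with $\kB_\tau\overset{d}{=}\beta$, and set $X_t:=\kB_{(\sigma t)\wedge\tau}$ for $t\in[0,1]$. Being a deterministic time change of the stopped Brownian motion $\kB_{\cdot\wedge\tau}$, the process $X$ is a continuous martingale with $X_0\overset{d}{=}\alpha$, $X_1=\kB_{\sigma\wedge\tau}=\kB_\tau\overset{d}{=}\beta$, and $\langle X\rangle_t=(\sigma t)\wedge\tau$, so that $\frac{d\langle X\rangle_t}{dt}=\sigma\mathbf{1}_{\{\sigma t<\tau\}}\in[0,\sigma]$. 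The law of $X$ on $C[0,1]$ then belongs to $A\cap B\cap C$.

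For the converse, starting from $P\in A\cap B\cap C$ I would apply the Dambis--Dubins--Schwarz theorem to the continuous martingale $X-X_0$. Since $\langle X\rangle_1\leq\sigma<\infty$, after the standard enlargement of the probability space this produces a standard Brownian motion $W$ from $0$, with increments independent of $X_0$, such that $X_t-X_0=W_{\langle X\rangle_t}$. Setting $\kB:=X_0+W$ yields a Brownian motion with $\kB_0=X_0\overset{d}{=}\alpha$, while $\tau:=\langle X\rangle_1=\int_0^1\frac{d\langle X\rangle_t}{dt}\,dt\leq\sigma$ and $\kB_\tau=X_0+W_{\langle X\rangle_1}=X_1\overset{d}{=}\beta$, giving the required embedding.

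The delicate point --- and the step I expect to be the main obstacle --- is the bookkeeping in this converse: I must check that $\tau=\langle X\rangle_1$ is genuinely a stopping time of the time-changed filtration $\mathcal{G}_s=\mathcal{F}_{T_s}$ for which $W$ is a Brownian motion, using $\{\langle X\rangle_1\leq s\}=\{T_s\geq 1\}$ with $T_s=\inf\{t:\langle X\rangle_t>s\}$, and that $\kB=X_0+W$ is a Brownian motion with initial law $\alpha$ in the sense required, i.e.\ that $W$ is independent of $X_0$ and that the enlargement preserves these properties. One must also confirm that $\langle X\rangle$ is absolutely continuous with density in $[0,\sigma]$ for $P\in C$, so that the identity $\tau=\int_0^1\frac{d\langle X\rangle_t}{dt}\,dt$ and the bound $\tau\leq\sigma$ are legitimate. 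By comparison, the forward construction and the reduction to \thref{t1} are routine.
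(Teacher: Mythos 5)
Your proof is correct and follows essentially the same route as the paper: identify $\mathcal{Q}_{\sigma,1}$ with the volatility-uncertainty family of \exref{ex3} so that $f^\Gamma=f^{\sigma,1}$, invoke \thref{t1} (via \leref{l1}) to reduce the inequality to $A\cap B\cap C\neq\emptyset$, and pass between martingale measures and embeddings by deterministic time rescaling and the Dambis--Dubins--Schwarz theorem. The only cosmetic difference is in the forward direction, where you explicitly build the law of $X_t=\kB_{(\sigma t)\wedge\tau}$ as an element of $A\cap B\cap C$ and then use the easy half of the equivalence, whereas the paper obtains $\alpha(f^{\sigma,1})\leq\beta(f)$ directly by conditioning on $\mathcal{B}_0$ and rescaling time; both arguments are sound.
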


\begin{proof}
``$\Longrightarrow$''. For $f \in \mathfrak{C}$, we have that
$$\beta(f)=\E^W[f(\kB_\tau)]=\E^W[\E^W[f(\kB_\tau)|\mathcal{B}_0]]\geq\alpha(f^{1,\sigma})=\alpha(f^{\sigma,1}),$$
where $W$ is the probability measure associated with the Brownian motion, and the third (in)equality follows from  $\frac{d\langle\bar X_{\cdot\wedge\tau}\rangle_t}{dt}=0$ for $t>\tau$, and the fourth (in)equality follows from a change of the time scale.

``$\Longleftarrow$''.  Take $d=1, \mathbb{D}=[0,\sigma]$ in \exref{ex3}, and $$\Gamma(x):=\left\{Q\in\kP(\bar\Omega^x):\ Q\text{ martingale measure},\ \frac{d\langle \bar X\rangle_t}{dt}\leq \sigma,\ 0\leq t\leq 1,\ dt\times Q\text{-a.e.}\right\},$$
where $\bar\Omega^x$ is the set of continuous paths starting from position $x$. Then we have $f^{\Gamma}(x)=f^{\sigma,1}(x)$. 
Applying \thref{t1} and \exref{ex3}, there exists $Q\in\mathcal{Q}_{\sigma,1}$ such that
$$Q\circ X_0^{-1}=\alpha\quad\text{and}\quad Q\circ X_1^{-1}=\beta.$$
By the Dambis-Dubins-Schwarz theorem (see e.g. \cite[Theorem 3.4.6]{MR1121940}, we can extend $X$ and $Q$ to the time interval $[0,\infty)$ so that the condition of the theorem is satisfied), $\kB_s:=X_{\mathfrak{T}(s)}$ is a Brownian motion w.r.t. the filtration $\mathcal{G}_s:=\overline{\mathcal{F}}_{\mathfrak{T}(s)}$, having the initial distribution $\mathfrak{B}_0\overset{d}=\alpha$, and $X_t=\kB_{\langle X\rangle_t}$,
where $\overline{\mathcal{F}}_t$ is given by $\cap_{\eps>0}\mathcal{F}_{t+\eps}$ completed by $Q$, and
$$\mathfrak{T}_s:=\inf\{t\geq 0:\ \langle X\rangle_t>s\}.$$
In particular,
$$X_1=\kB_{\langle X\rangle_1}\overset{d}=\beta\quad\text{and}\quad\tau:=\langle X\rangle_1\leq\sigma.$$
\end{proof}

\subsection{Proof of \thref{t1}} 

\begin{proof}
``$\Longrightarrow$''. The argument is similar to the one for \prref{p1}.

``$\Longleftarrow$''. Let
$$\Gamma_{0,T}(x):=\{P\circ (X_0, X_T)^{-1}:\ P\in\Gamma(x)\}.$$
Then
$$f^\Gamma(x)=\inf_{Q\in\Gamma_{0,T}(x)}\E^Q[f(Y_1)],$$
where $Y=(Y_0,Y_1):=(X_0,X_T)$ is the canonical process on $\X^2$ (starting from position $x$). By \prref{p1}, there exists $P^*\in\kP(\X^2)$ such that
$$P^*\circ Y_0^{-1}=\alpha,\ P^*\circ Y_1^{-1}=\beta,\ P^*|_{Y_0}\in\Gamma_{0,T}(Y_0),\ P^*\text{-a.s.}.$$
Let
$$P^*=\alpha\otimes Q^*,$$
be the disintegration of $P^*$, where $Q^*(\cdot)$ is Borel measurable. By restricting to a Borel set $L\in\sigma(Y_0)=\mathcal{B}(\X)$ with $P^*\circ Y_0^{-1}(L)=1$, we may without loss of generality assume that $Q^*(x)\in\Gamma(x)$ for all $x\in\X$. Then the set
$$I_1:=\{(x,P,Q):\ x\in\X,P\in\kP(\Omega),Q=Q^*(x)\}$$
 is Borel measurable. Moreover, since $\Gr(\Gamma)$ is analytic, the set
$$I_2:=\{(x,P,Q):\ x\in\X,P\in\Gamma(x),Q=P\circ X_T^{-1}\}$$
is also analytic. Then the set
$$I_1\cap I_2=\{(x,P,Q):\ x\in\X, P\in\Gamma(x),\ P\circ X_T^{-1}=Q=Q^*(x)\}$$
is analytic. By the Jankov-von Neumann Theorem (see e.g., \cite[Proposition 7.49]{Shreve}), there exists a univerally measurable selector $(P',Q'):\ \X\mapsto\kP(\Omega)\times\kP(\X)$ such that
$$P'\in\Gamma(x),\ (P'(x))\circ X_T^{-1}=Q'(x)=Q^*(x).$$
Define
$$\bar P=\alpha\otimes P'.$$
It can be seen that $\bar P\in A\cap B\cap C$.
\end{proof}

\begin{remark}[\textbf{Extension to moment constraints}]
Let $\mathcal{A} \subset \kP_G(\X)$ be convex and $G$-compact, $\mathcal{B} \subset \kP_H(\X)$ be convex and $H$-closed. Define
$$\mathbb{A}:=\{P\in\kP(\Omega):\ P\circ X_0^{-1}\in\mathcal{A}\}\quad\text{and}\quad\mathbb{B}:=\{P\in\kP(\Omega):\ P\circ X_T^{-1}\in\mathcal{B}\}.$$
Using almost the same argument as above, we have the following. Assume $(\mathbb{A}\cap C)_{0,T}$ is convex and $J$-closed. Then
$$\mathbb{A}\cap\mathbb{B}\cap C\neq\emptyset\ \Equiv\ \inf_{\alpha\in\mathcal{A}}\alpha(f^\Gamma)\leq\sup_{\beta\in\mathcal{B}}\beta(f),\ \forall\,f
\in\mathfrak{C}_H.$$

\end{remark}

\section{Result for multiple marginals}

We still use the three cases of $\Omega$ from the last section. Assume $0=t_0<t_1<\dotso<t_n=T$ such that for $i=1,\dotso,n-1$, $t_i\in\{1,\dotso,N-1\}$ if $\Omega=\X^{N+1}$, and $t_i\in[0,1]$ if $\Omega=C[0,1]$ or $D[0,1]$. For $i=0,\dotso,n-1$, let $\Omega_i=\X^{t_{i+1}-t_i+1},C[0,t_{i+1}-t_i],D[0,t_{i+1}-t_i]$, and $\overline\Omega_i=\X^{N-t_i+1},C[0,1-t_i],D[0,1-t_i]$, if $\Omega=\X^{N+1},C[0,1],D[0,1]$ respectively. Let $\Omega_i^x\subset\Omega_i(\cdot)$ be the space of the paths starting from $x\in\X$. Denote $X_{[0,t]}$ the path from time $0$ to time $t$.

Let $\Gamma_i:\,\X\mapsto 2^{\kP(\Omega_i)}$ such that $\emptyset\neq\Gamma_i(x)\subset\kP(\Omega_i^x)$ for any $x\in\X$, and assume $\Gr(\Gamma_i)$ is analytic, $i=0,\dotso,n-1$. Define $P^{t_i,\omega}$ to be the conditional probability of $P$ given $\omega$ up to time $t_i$, i.e., for any Borel measurable function $f$ on $\Omega$,
$$\E^{P^{t_i,\omega}}[f(\omega\otimes_{t_i}\cdot)]=\E^P[f|\mathcal{F}_t](\omega),\quad P\text{-a.s. }\omega,$$
where for $\omega'\in\overline\Omega_i$ such that $\omega_0'=\omega_{t_i}$,
$$(\omega\otimes_{t_i}\omega')_s=
\begin{cases}
\omega_s,& s<t_i,\\
\omega_{s-t_i}',& s\geq t_i.
\end{cases}
$$ 
Let
\begin{equation}\label{e8}
C_i:=\{P\in\kP(\Omega_i):\ P|_{X_0=\omega_0}\in\Gamma_i(\omega_0),\ P\text{-a.s. }\omega\},
\end{equation}
and
\begin{equation}\label{e15}
\overline C_i:=\left\{P\in\kP(\Omega):\ P^{t_i,\omega,}\circ X_{[0,t_{i+1}-t_i]}^{-1}\in\Gamma_i(\omega_{t_i}),\ P\text{-a.s. }\omega\right\},
\end{equation}
where $P^{t_i,\omega,}\circ X_{[0,t_{i+1}-t_i]}^{-1}$ represents the marginal probability distribution of $P^{t_i,\omega}$ from time $0$ to time $t_{i+1}-t_i$. 

\begin{remark}
Here $\Gamma_i$ represents the restriction of probability measures from time $t_i$ to time $t_{i+1}$. Note that the restriction only depends on the current location instead of the whole history (i.e., path). This property is critical for the construction of probability measures with multiple marginals later on. Also note that it does not imply the underlying probability measure is Markovian.
\end{remark}


\begin {example}\label{ex:mar1}
Assume $\Omega=C[0,1]$ with $\X=\R^d$. Let $P\in\kP(\Omega)$ be a martingale measure such that
\begin{equation}\label{e6}
\frac{d\langle X\rangle_t}{dt}\in\mathbb{D},\quad dt\times P\text{-a.e.},
\end{equation}
where $\mathbb{D}\subset\R^{d\times d}$ is some bounded set of matrices. Then this martingale and volatility uncertainty restriction satisfies  the property mentioned above. To be more specific, let
$$\Gamma_i(x):=\left\{Q\in\kP(\Omega_i^x):\ Q\text{ martingale measure},\ \frac{d\langle X\rangle_t}{dt}\in\mathbb{D},\ dt\times Q\text{-a.e.}\right\}.$$
Then $P$ satisfies \eqref{e6} if and only if $P\in\cap_{i=0}^{n-1}\overline C_i$.
\end{example}

Let $\alpha_i\in\kP(\X)$, and
\begin{equation*}
\begin{aligned}
&A_i:=\{P\in\kP(\Omega_i):\ P\circ X_0^{-1}=\alpha_i\}\quad\text{and} \\
& \overline A_i:=\{P\in\kP(\Omega):\ P\circ X_{t_i}^{-1}=\alpha_i\},\quad i=0,\dotso,n.
\end{aligned}
\end{equation*}
Recall $f^\Gamma$ defined in \eqref{e7}. The following is the main result of this section.

\begin{theorem}\label{t2}
Let $G=H$. Assume $\alpha_i \in \kP_H$ and $(A_i\cap C_i)_{0,t_{i+1}-t_i}$ is convex and $J$-closed for $i=0,\dotso,n$. Then
$$\bigcap_{i=0}^n\overline A_i\cap \bigcap_{j=0}^{n-1}\overline C_j\neq\emptyset\ \Equiv\  \alpha_i(f^{\Gamma_i})\leq\alpha_{i+1}(f),\ \forall\,f\in\mathfrak{C}_H,\ i=0,\dotso,n-1.$$
\end{theorem}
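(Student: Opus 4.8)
The plan is to treat the two directions separately, handling necessity by a direct conditioning argument and sufficiency by applying \thref{t1} to each consecutive pair of marginals and then pasting the resulting plans together; this is the ``pasting argument'' alluded to in the introduction.

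For the necessity direction ``$\Longrightarrow$'', suppose $P\in\bigcap_{i=0}^n\overline A_i\cap\bigcap_{j=0}^{n-1}\overline C_j$. Fix $i\in\{0,\dots,n-1\}$ and $f\in\mathfrak{C}_H$. Conditioning on $\mathcal{F}_{t_i}$ and using the definition of $P^{t_i,\omega}$ together with $(\omega\otimes_{t_i}\omega')_{t_{i+1}}=\omega'_{t_{i+1}-t_i}$, one gets $\E^P[f(X_{t_{i+1}})\mid\mathcal{F}_{t_i}](\omega)=\E^{P^{t_i,\omega}}[f(X_{t_{i+1}-t_i})]$ for $P$-a.e.\ $\omega$. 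Since $P\in\overline C_i$ forces $P^{t_i,\omega}\circ X_{[0,t_{i+1}-t_i]}^{-1}\in\Gamma_i(\omega_{t_i})$, the right-hand side is bounded below by $\inf_{Q\in\Gamma_i(\omega_{t_i})}\E^Q[f(X_{t_{i+1}-t_i})]=f^{\Gamma_i}(\omega_{t_i})$. Taking $P$-expectations and using $P\circ X_{t_i}^{-1}=\alpha_i$ and $P\circ X_{t_{i+1}}^{-1}=\alpha_{i+1}$ yields $\alpha_{i+1}(f)\geq\alpha_i(f^{\Gamma_i})$, which is the asserted inequality.

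For the sufficiency direction ``$\Longleftarrow$'', I would first apply \thref{t1} blockwise. Since $G=H$ and every $\alpha_i\in\kP_H$, the pair $(\alpha_i,\alpha_{i+1})$ with the constraint $\Gamma_i$ on the segment space $\Omega_i$ meets the hypotheses of \thref{t1}: the marginals lie in $\kP_G$ and $\kP_H$, and $(A_i\cap C_i)_{0,t_{i+1}-t_i}$ is convex and $J$-closed by assumption. The inequality $\alpha_i(f^{\Gamma_i})\leq\alpha_{i+1}(f)$ for all $f\in\mathfrak{C}_H$ is precisely the right-hand side of \eqref{e1} for this block, so \thref{t1} produces $P_i\in\kP(\Omega_i)$ with $P_i\circ X_0^{-1}=\alpha_i$, $P_i\circ X_{t_{i+1}-t_i}^{-1}=\alpha_{i+1}$, and $P_i|_{X_0=x}\in\Gamma_i(x)$ for $P_i$-a.e.\ $x$. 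Disintegrating $P_i=\alpha_i\otimes\gamma_i$ and, if necessary, redefining $\gamma_i$ on an $\alpha_i$-null set via a universally measurable selector of $\Gr(\Gamma_i)$ (Jankov--von Neumann, as in \reref{r1}), I may assume $\gamma_i(x)\in\Gamma_i(x)$ for every $x\in\X$. I then build $P\in\kP(\Omega)$ by concatenating $\gamma_0,\dots,\gamma_{n-1}$ at the times $t_1,\dots,t_{n-1}$: starting from $\alpha_0$, draw the segment on $[t_i,t_{i+1}]$ from $\gamma_i(\omega_{t_i})$ given the path already built up to $t_i$, gluing via $\otimes_{t_i}$. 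Because each $\gamma_i(x)$ is supported on segments starting at $x=\omega_{t_i}$, the concatenated trajectories stay in $\Omega$ (continuity, resp.\ the RCLL property, is preserved at the junctions), so $P$ is well defined on $\X^{N+1}$, $C[0,1]$ or $D[0,1]$. The identities $P\circ X_{t_i}^{-1}=\alpha_i$ follow by induction: the law of the $i$-th segment is $\int\gamma_i(x)\,\alpha_i(dx)=P_i$, whose terminal marginal $\alpha_{i+1}$ matches the initial marginal of $P_{i+1}$, so the chain closes. By construction the $\mathcal{F}_{t_j}$-conditional law of the segment on $[t_j,t_{j+1}]$ is $\gamma_j(\omega_{t_j})\in\Gamma_j(\omega_{t_j})$, i.e.\ $P^{t_j,\omega}\circ X_{[0,t_{j+1}-t_j]}^{-1}\in\Gamma_j(\omega_{t_j})$, so $P\in\bigcap_j\overline C_j$; together with the marginal identities this gives $P\in\bigcap_i\overline A_i\cap\bigcap_j\overline C_j$.

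The main obstacle I anticipate is making the pasting rigorous rather than heuristic: one must select the kernels $\gamma_i$ in a (universally) measurable way landing inside $\Gamma_i$ everywhere, verify by an Ionescu--Tulcea / iterated-kernel construction that the concatenation defines a genuine Borel probability measure on the path space, and---most delicately---check that the \emph{full} conditional $P^{t_j,\omega}$ restricts on $[0,t_{j+1}-t_j]$ to exactly $\gamma_j(\omega_{t_j})$. This last point is where the structural hypothesis matters: it relies on $\Gamma_j$ depending only on the current position $\omega_{t_j}$ and not on the past history, the very property emphasized in the remark following \eqref{e15}. The continuous-time cases $C[0,1],D[0,1]$ demand extra care that the glued paths inherit the right regularity and that the segment laws $\gamma_i(x)$ concatenate measurably in $x$.
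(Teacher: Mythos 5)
Your proposal is correct and follows essentially the same route as the paper: necessity by conditioning on $\mathcal{F}_{t_i}$ and using the definition of $f^{\Gamma_i}$, and sufficiency by applying \thref{t1} blockwise to each pair $(\alpha_i,\alpha_{i+1})$ and pasting the resulting kernels $P_i|_{\omega_0=x}$ into a measure on $\Omega$, with the marginals verified by induction. The extra care you flag (modifying the disintegration on a null set via Jankov--von Neumann and checking the conditional laws at the junctions) is a reasonable tightening of steps the paper states without elaboration, not a departure from its argument.
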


\begin{proof}
``$\Longrightarrow$ ''. Take $P\in(\bigcap_{i=0}^n\overline A_i)\cap(\bigcap_{j=0}^{n-1}\overline C_j)$. For $i=0,\dotso,n-1$,
$$\alpha_{i+1}(f)=\E^P[f(X_{t_{i+1}})]=\E^P[\E^P[f(X_{t_{i+1}})|\mathcal{F}_{t_i}]]\geq\E^P[f^{\Gamma_i}(X_{t_i})]=\alpha_i(f^{\Gamma_i}),$$
where the inequality follows from the definition in \eqref{e7}, and the fact that the conditional probability associated with $\E^P[\cdot|\mathcal{F}_{t_i}](\omega)$ is an element of $\Gamma_i(\omega_{t_i})$ for $P$-a.s. $\omega$ (see \eqref{e8} and \eqref{e15}).

``$\Longleftarrow$''. By \thref{t1} there exists a probability measure $P_i\in A_i\cap B_i\cap C_i$ on $\Omega_i$ for $i=0,\dotso,n-1$, where
$$B_i:=\{P\in\kP(\Omega_i):\ P\circ X_{t_{i+1}-t_i}^{-1}=\alpha_{i+1}\}.$$
Let $P:=P_0\otimes\dotso\otimes P_{n-1}$. That is,
$$P(I):=\int_IP_0(d\omega_{[t_0,t_1]})P_1(\omega_{t_1},d\omega_{[t_1,t_2]})\dotso P_{n-1}(\omega_{t_{n-1}},d\omega_{[t_{n-1},t_n]}),\quad I\in\mathcal{B}(\Omega).$$
where for $x\in\X$,
\begin{equation}\label{eq:tranker}
P_i(x,\cdot):=P_i|_{\omega_0=x}
\end{equation}
is the conditional probability of $P_i$ given $\omega_0=x$. It can be shown that $P$ indeed is a probability measure on $\Omega$. Moreover, $P^{t_i,\omega}\circ X_{[0,t_{i+1}-t_i]}^{-1}=P_i(\omega_{t_i},\cdot)\in\Gamma_i(\omega_{t_i})$ for $P$-a.s. $\omega$, and thus $P\in \overline C_i$ for $i=0,\dotso,n-1$.

It remains to show that
\begin{equation}\label{e9}
P\circ X_{t_i}^{-1}=\alpha_i,\quad i=0,\dotso,n.
\end{equation}
We prove the above by induction. Obviously \eqref{e9} holds for $i=0$. Assume it holds for $i=k$ with $0\leq k\leq n-1$, and consider the case when $i=k+1$. For any bounded and measurable function $f$ on $\X$, we have that
\begin{align*}
\E^P[f(X_{t_{k+1}})]&=\E^P[\E^P[f(X_{t_{k+1}})|X_{t_k}]]\\
&=\int_\X\alpha_k(dx)\int_{\Omega_k^x}f(\omega_{t_{k+1}-t_k})P_k(x,d\omega)\\
&=\int_{\Omega_k}f(\omega_{t_{k+1}-t_k})\alpha_k(dx)P_k(x,d\omega)\\
&=\E^{P_k}[f(X_{t_{k+1}-t_k})]\\
&=\alpha_{k+1}(f),
\end{align*}
where the second equality follows from the induction hypothesis $P\circ X_{t_k}^{-1}=\alpha_k$ and \eqref{eq:tranker}, the fourth equality follows from $P_k\in A_k$, and the fifth from $P_k\in B_k$.
\end{proof}

\section{Result with all the 1-D marginals in continuous time}

In this section, we consider two cases $\Omega=C[0,1]$ or $D[0,1]$. For $t\in[0,1]$, let $\Omega_t=C[0,t],D[0,t]$ when $\Omega=C[0,1],D[0,1]$ respectively, $\Omega_t^x\subset\Omega_t$ be the set of paths starting from position $x\in\X$. We are given a class of maps $\Gamma_{[s,t]}:\,\X\mapsto 2^{\kP(\Omega_{t-s})}$ for $0\leq s<t\leq 1$. Each $\Gamma_{[s,t]}$ will represent the restriction of probability measures to the time interval $[s,t]$. In particular, this restriction is Markovian in the sense that $\Gamma_{[s,t]}(\cdot)$ only depends on the current value $\omega_s\in\X$ instead of the whole history $\omega_{[0,s]}$. Again we assume that for any $0\leq s<t\leq 1$, $\emptyset\neq\Gamma_{[s,t]}(x)\subset\kP(\Omega_{t-s}^x)$ for $x\in\X$, and $\Gr(\Gamma_{[s,t]})$ is analytic.

For $0\leq s<t\leq 1$, let
$$C_{[s,t]}:=\{P\in\kP(\Omega):\ P^{s,\omega}\circ X_{[s,t]}^{-1}\in\Gamma_{[s,t]}(\omega_s),\ P\text{-a.s. }\omega\}.$$
We assume $\{\Gamma_{[s,t]}\}_{0\leq s<t\leq 1}$ is such that the following consistency property holds:
\begin{equation}\label{e10}
C_{[s,t]}\cap C_{[s',t']}=C_{[s\wedge s',t\vee t']},\quad\text{if}\ [s,t]\cap[s',t']\neq\emptyset.
\end{equation}

Let $(\alpha_t)_{t\in[0,1]}\subset\kP(\X)$. We will consider probability measures on $\Omega$ with marginals $(\alpha_t)_{t\in[0,1]}$. We assume the map $t\mapsto\alpha_t$ is continuous if $\Omega=C[0,1]$, and is right continuous if $\Omega=D[0,1]$ (otherwise $(\alpha_t)_{t\in[0,1]}$ cannot be the marginals of any $P\in\kP(\Omega)$). Define
$$A_t:=\{P\in\kP(\Omega):\ P\circ X_t^{-1}=\alpha_t\},\quad t\in[0,1].$$

Below is the main result of this section.
\begin{theorem}\label{t3}
Assume $A_s\cap C_{[s,t]}$ is weakly compact for any $0\leq s<t\leq T$. Then
$$\bigcap_{0\leq r\leq 1}A_r\cap\bigcap_{0\leq s<t\leq 1}C_{[s,t]}\neq\emptyset\ \Equiv\ \alpha_s(f^{\Gamma_{[s,t]}})\leq\alpha_t(f),\ \forall\,f\in\mathfrak{C},\ 0\leq s<t\leq 1.$$
\end{theorem}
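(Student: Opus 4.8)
The plan is to prove the two implications separately: ``$\Longrightarrow$'' is a direct tower-property computation, while ``$\Longleftarrow$'' proceeds by discretizing time, applying \thref{t2} on finite partitions, and passing to a weak limit. For ``$\Longrightarrow$'', suppose $P$ lies in the intersection, fix $f\in\mathfrak{C}$ and $0\le s<t\le 1$. Since $P\in C_{[s,t]}$, the conditional law $P^{s,\omega}\circ X_{[s,t]}^{-1}$ belongs to $\Gamma_{[s,t]}(\omega_s)$ for $P$-a.e.\ $\omega$, so the definition \eqref{e7} of $f^{\Gamma_{[s,t]}}$ gives $\E^P[f(X_t)\mid\mathcal{F}_s](\omega)\ge f^{\Gamma_{[s,t]}}(\omega_s)$. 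Taking expectations and using $P\in A_s\cap A_t$ yields $\alpha_t(f)=\E^P[\E^P[f(X_t)\mid\mathcal{F}_s]]\ge\E^P[f^{\Gamma_{[s,t]}}(X_s)]=\alpha_s(f^{\Gamma_{[s,t]}})$, which is exactly the forward direction of \thref{t2}.

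For ``$\Longleftarrow$'', I would fix a countable dense set $D\subset[0,1]$ containing $0$ and $1$, and take an increasing sequence of finite subsets $\pi_n=\{0=t_0^n<\dots<t_{k_n}^n=1\}$ with $\bigcup_n\pi_n=D$ and mesh tending to $0$. For each $n$, set $\Gamma_i:=\Gamma_{[t_i^n,t_{i+1}^n]}$; the assumed inequality with $(s,t)=(t_i^n,t_{i+1}^n)$ is precisely $\alpha_{t_i^n}(f^{\Gamma_i})\le\alpha_{t_{i+1}^n}(f)$ for all $f\in\mathfrak{C}$, the hypothesis of \thref{t2}. The remaining hypotheses I would derive from the standing assumption: weak compactness of $A_{t_i^n}\cap C_{[t_i^n,t_{i+1}^n]}$ transfers, via the continuous restriction-and-shift map $\omega\mapsto\omega_{t_i^n+\cdot}$, to weak compactness of the corresponding $A_i\cap C_i$ on the segment space, so \leref{l1} delivers $J$-closedness (here $J=1$) of $(A_i\cap C_i)_{0,t_{i+1}^n-t_i^n}$. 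Thus \thref{t2} produces $P^n\in\kP(\Omega)$ with $P^n\circ X_{t_i^n}^{-1}=\alpha_{t_i^n}$ for every $i$ and $P^n\in\bigcap_i C_{[t_i^n,t_{i+1}^n]}$. Applying the consistency property \eqref{e10} inductively across the shared points $t_1^n,\dots,t_{k_n-1}^n$ collapses this intersection to $C_{[0,1]}$; the same identity gives $\bigcap_{0\le s<t\le 1}C_{[s,t]}=C_{[0,1]}$.

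The decisive point is then that every $P^n$ lies in the single set $A_0\cap C_{[0,1]}$, assumed weakly compact. Hence $(P^n)$ is tight, and along a subsequence $P^n\to P$ weakly with $P\in A_0\cap C_{[0,1]}=\bigcap_{0\le s<t\le 1}C_{[s,t]}$, which settles the entire transport-constraint part of the conclusion in one stroke. It remains to match all the marginals. For fixed $t$ the evaluation $\omega\mapsto\omega_t$ is (a.s.) continuous, so $P^n\circ X_t^{-1}\to P\circ X_t^{-1}$; for $t\in D$ one has $P^n\circ X_t^{-1}=\alpha_t$ for all large $n$, giving $P\circ X_t^{-1}=\alpha_t$ on the dense set $D$. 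Approximating an arbitrary $t$ by points of $D$ (from the right in the $D[0,1]$ case) and invoking the continuity, resp.\ right-continuity, of both $t\mapsto\alpha_t$ and $t\mapsto P\circ X_t^{-1}$ extends $P\circ X_t^{-1}=\alpha_t$ to every $t\in[0,1]$, so $P\in\bigcap_{0\le r\le 1}A_r$ and the intersection is nonempty.

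The step I expect to be the main obstacle is the limiting argument in the Skorokhod case $\Omega=D[0,1]$: the evaluation and restriction-and-shift maps are continuous only at continuity points of a path, so the marginal identification must be performed at times avoiding the (at most countable) set of fixed discontinuities of the limit $P$, and the extension to all $t$ must rest on the right-continuity hypothesis on $t\mapsto\alpha_t$. A secondary technical point is the faithful transfer of the \thref{t2} hypotheses between the full path space and the segment spaces $\Omega_i$, together with the convexity required there being part of the structure of the constraints; once these are in place, it is the consistency identity \eqref{e10} that does the real structural work, reducing the continuum of constraints to the single weakly compact set $A_0\cap C_{[0,1]}$.
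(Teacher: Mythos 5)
Your proposal is correct and follows essentially the same route as the paper's proof: discretize time along nested finite partitions of a dense set (the paper uses dyadic rationals), invoke \thref{t2} on each partition, use the consistency identity \eqref{e10} to collapse the transport constraints to the single weakly compact set containing all the $P^n$, extract a weak limit, and identify the marginals first on the dense set and then everywhere via (right-)continuity of $t\mapsto\alpha_t$. Your extra care about transferring the hypotheses of \thref{t2} to the segment spaces and about the evaluation-map continuity in the $D[0,1]$ case addresses points the paper leaves implicit, but does not change the argument.
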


\begin{proof}
``$\Longrightarrow$'' follows from the same argument used in the proof of \prref{p1}.

``$\Longleftarrow$'' By \thref{t2}, there exists $P^n\in \Lambda^n$, where
$$\Lambda^n:=\bigcap_{i=0}^{2^n} A_{i/2^n}\cap\bigcap_{j=0}^{2^n-1} C_{[j/2^n,(j+1)/2^n]}.$$
According to our assumption $A_s\cap C_{[s,t]}$ is weakly compact for any $0\leq s<t\leq T$, it is easy to show that $\Lambda^n$ is weakly compact for any $n\in\mathbb{N}$. By the consistency assumption \eqref{e10}, it follows that 
$$\bigcap_{j=0}^{2^n-1} C_{[j/2^n,(j+1)/2^n]}=C_{[0,1]},$$ and hence
$$\Lambda^{n+1}=\bigcap_{i=0}^{2^{n+1}} A_{i/2^{n+1}}\cap C_{[0,1]} \subset \bigcap_{i=0}^{2^{n}} A_{i/2^{n}}\cap C_{[0,1]}=\Lambda^n.$$ Therefore, $P^m\in\Lambda^n$ for any $m\geq n$. In particular, $P^m\in\Lambda^1$ with $\Lambda^1$ weakly compact. Then there exists $P\in\kP(\Omega)$ such that
$$P^m\overset{w}\rightarrow P.$$
It can be seen that $P\in\Lambda^n$ for any $n\in\mathbb{N}$.

The proof of  $P\in\bigcap_{0\leq r\leq 1}A_r\cap\bigcap_{0\leq s<t\leq 1}C_{[s,t]}$ goes as follows. By \eqref{e10}, $P\in C_{[0,1]} \subset C_{[s,t]}$ for any $0\leq s<t\leq 1$. 
If $t\in\cT$, where
$$\cT:=\{k/2^n:\ k=0,\dotso,2^n,\ n\in\mathbb{N}\},$$
then $P\circ X_t^{-1}=\alpha_t$, since $P_n\circ X_t^{-1}=\alpha_t$ for $n$ large enough. In general, for $t\in[0,1]$, let $t^k\in\cT$ such that $t^k\searrow t$. Since $X$ is right continuous,
$$\alpha_{t^k}=P\circ X_{t^k}^{-1}\overset{w}\rightarrow P\circ X_t^{-1}.$$
As $t\mapsto\alpha_t$ is right continuous, we have $P\circ X_t^{-1}=\alpha_t$.
\end{proof}

\begin{remark}
The result still holds and the proof still goes through with minor adjustments, if we weaken/replace the assumption by: (1) there exists $\mathfrak{T}\subset[0,1]$ that is dense in $[0,1]$, such that $(A_s\cap C_{[s,t]})\circ(X_s,X_t)^{-1}$ is convex and closed for any $s,t\in\mathfrak{T}$ with $s<t$; (2) $A_s\cap C_{[s,t]}$ is weakly compact for any $s,t\in\mathfrak{T}$ with $s<t$; (3) the consistency assumption \eqref{e10}.
\end{remark}

\begin{example}[Martingale measures with volatility uncertainty]\label{ex:mar2}
Let $\Omega=C[0,1]$ with $\X=\R^d$. Assume $\alpha_t$ has a finite first moment for any $t\in[0,1]$. Let
\begin{equation*}
\Lambda:=\left\{P\in\kP(\Omega):
\begin{aligned}
&\ P\circ X_t^{-1}=\alpha_t,\ t\in[0,1],\ P \text{ martingale measure}, \\
&\ \frac{d\langle X\rangle_t}{dt}\in\mathbb{D},\ dt\times P\text{-a.e.} 
\end{aligned}
\right\},
\end{equation*}
where $\mathbb{D}\subset\R^{d\times d}$ is a convex and compact set of matrices.

Then it can be seen that
$$\Lambda=\bigcap_{0\leq r\leq 1}A_r\cap\bigcap_{0\leq s<t\leq 1}C_{[s,t]},$$
with $\Gamma_{[s,t]}$ defined by
\begin{equation*}
\begin{aligned}
&\Gamma_{t-s}:=\Gamma_{[s,t]}(x) \\
:=& \left\{Q\in\kP(\Omega_{t-s}^x):\ Q\text{ martingale measure},\ \frac{d\langle X\rangle_r}{dr}\in\mathbb{D},\ dr\times Q\text{-a.e.}\right\}.
\end{aligned}
\end{equation*}
Moreover, with $\Gamma$ defined above, the consistency condition \eqref{e10} is obviously satisfied, and $A_s\cap C_{[s,t]}$ is weakly compact for any $0\leq s<t\leq T$. Therefore, by \thref{t3},
$$\Lambda\neq\emptyset\ \Equiv\ \ \alpha_s(f^{\Gamma_{t-s}})\leq\alpha_t(f),\ \forall\,f\in\mathfrak{C},\ 0\leq s<t\leq 1.$$
Again,
$$f^{\Gamma_r}(x)=\inf_{Q\in\Gamma_r(x)}\E^Q[f(X_T)]$$
is the $G$-expectation of $f$ (see \cite{Peng}).
\end{example}

\begin{remark}\label{kellerer}
When $\X=\mathbb{R}^d$, the existence of a martingale measure without volatility constraint with given marginals $(\mu_t)_t$ is characterized by Kellerer in \cite{kellerer}, Hirsch and Roynette in \cite{MR3066388}. For any stochastic process $X$, denote by $\mathcal{F}^X$ the filtration $\mathcal{F}^X(t):=\sigma(X_s, s \leq t)$. Then 
$$ \exists\ \text{martingale $X$ w.r.t. $\mathcal{F}^X$ s.t. }X_t\overset{d}=\mu_t\ \Equiv\ t\mapsto\mu_t(f)\ \text{is increasing$,\ \forall$ convex functions }f.$$
In particular for $d=1$, Kellerer showed that the martingale can be Markov. 
\end{remark}

\section{Kantorovich duality}
In this section, we will provide the Kantorovich duality with our domain constraint as in section 2. Our proof idea is similar to \cite[Theorem 9.5]{MR3706606} where it proved an unconstrained result. Here we use the usual weak topology, but the results can be easily generalized to relativized case. 

Consider the optimization problem 
\begin{equation}\label{eo1}
\mathcal{T}_c^{\Gamma}(\alpha,\beta)=\inf_{\pi \in \Pi_{\Gamma}(\alpha,\beta)} \int_\mathbb{X} c(x,\pi|_{X_0=x}) \alpha(dx),
\end{equation}
where $$\Pi_{\Gamma}(\alpha,\beta):=\{ \pi \in \kP(\Omega): \ \pi \circ X_0^{-1}=\alpha,\ \pi \circ X_T^{-1}=\beta, \  \pi|_{X_0=x} \in \Gamma(x) \ \pi \text{-a.s.} \}$$ is the set of probability measures with marginals $\alpha, \beta$ and domain constraint $\Gamma$. We make the following assumption.
\begin{assumption}\label{a1}
\begin{itemize}
\item[(i)]  The cost function $c:\,\mathbb{X}\times\mathfrak{P}(\Omega) \to [0,\infty]$ is lower-semicontinuous with respect to product topology.
\item[(ii)] The function $Q \mapsto c(x,Q)$ is convex for all $x \in \mathbb{X}$.
\end{itemize}
\end{assumption}

\begin{example}
If $c$ is given by
\begin{equation}\label{eo6}
c(x,Q)=\int _{y \in \mathbb{X}} \cC(x,y) \; Q\circ X_T^{-1}(dy),
\end{equation}
where $\cC: \mathbb{X} \times \mathbb{X} \to [0,\infty]$ is continuous. Then
\begin{equation}\label{eo7}
\mathcal{T}_c^{\Gamma}(\alpha,\beta)=\inf_{\pi \in \Pi_{\Gamma}(\alpha,\beta)} \int \cC(x,y) \; \pi(dx,dy).
\end{equation}
In this case, $c$ is linear with respect to $Q$ and \asref{a1} is satisfied.
\end{example}

\begin{remark}
By a slight modification of \cite[Proposition 2.8]{MR4029731}, it can be seen that the function $$\pi \mapsto I_c[\pi]:=\int c(x,\pi|_{X_0=x}) \alpha(dx)$$ is lower-semicontinuous under \asref{a1}.
\end{remark}

\begin{remark}
Assume $\Omega=\X^2$ and $A\cap C$ is weakly closed. \prref{p1} provides a necessary and sufficient condition for the non-emptiness of the weakly compact set $A \cap B \cap C$. Then under \asref{a1}, the infimum in \eqref{eo1} is attained. 
\end{remark}

We use $\Phi $ (resp. ${\Phi}_b(\mathbb{X})$) to denote the set of continuous (resp. continuous and bounded from below) functions $\phi: \mathbb{X} \to \mathbb{R}$ satisfying the linear growth condition
$$|\phi(x)| \leq a+b \:d(x,x_0), \forall x \in \mathbb{X}, $$ for some $a,b \geq 0$ and some (and hence all) $x_0 \in \mathbb{X}$. Below is the Kantorovich duality with the domain constraint.

\begin{theorem}\label{t4}
Assume $\Omega=\X^2$, $A \cap C$ is convex and weakly closed, and let \asref{a1} hold. Then
\begin{equation}\label{eo2}
\mathcal{T}_c^{\Gamma}(\alpha,\beta)=\sup_{\phi \in \Phi_b(\mathbb{X})} \left\{   \int R_c^{\Gamma} \phi(x) \alpha(dx)-\int \phi(y) \beta(dy)          \right\},
\end{equation}
where $$R_c^{\Gamma} \phi(x):= \inf_{Q \in \Gamma(x)} \{ \mathbb{E}^Q[\phi(X_T)] + c(x,Q) \},\quad x \in \mathbb{X}, \phi \in \Phi_b(\mathbb{X}).$$
\end{theorem}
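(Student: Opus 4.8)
The plan is to prove the two inequalities separately, with the easy ``weak duality'' direction first and the nontrivial direction via a minimax/Hahn--Banach argument. For weak duality, fix any $\phi \in \Phi_b(\X)$ and any $\pi \in \Pi_\Gamma(\alpha,\beta)$. Disintegrating $\pi = \alpha \otimes \pi|_{X_0=\cdot}$ with $\pi|_{X_0=x} \in \Gamma(x)$, the definition of $R_c^\Gamma$ gives $R_c^\Gamma\phi(x) \leq \E^{\pi|_{X_0=x}}[\phi(X_T)] + c(x,\pi|_{X_0=x})$ pointwise. Integrating against $\alpha(dx)$, using $\pi\circ X_0^{-1}=\alpha$ and $\pi\circ X_T^{-1}=\beta$ to rewrite $\int \E^{\pi|_{X_0=x}}[\phi(X_T)]\,\alpha(dx) = \int \phi\,d\beta$, yields
$$
\int R_c^\Gamma\phi\,d\alpha - \int \phi\,d\beta \leq \int c(x,\pi|_{X_0=x})\,\alpha(dx) = I_c[\pi].
$$
Taking sup over $\phi$ and inf over $\pi$ gives ``$\geq$'' in \eqref{eo2}.

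For the hard direction ``$\leq$'', I would follow the template of \cite[Theorem 9.5]{MR3706606}, casting the problem as a convex duality on the space of measures. The idea is to view $\mathcal{T}_c^\Gamma(\alpha,\beta)$ as the value of a convex minimization over $\pi$ subject to the two linear marginal constraints $\pi\circ X_0^{-1}=\alpha$ and $\pi\circ X_T^{-1}=\beta$, with the domain constraint $\pi|_{X_0=x}\in\Gamma(x)$ and the functional $I_c$ folded into the objective. First I would define, on the vector space of continuous functions with linear growth, a convex function whose Legendre--Fenchel conjugate encodes $\mathcal{T}_c^\Gamma$; concretely, set $\Xi(\phi,\psi)$ to be the value obtained by relaxing the $\beta$-marginal constraint through the test function $\psi$ and the $\alpha$-marginal through $\phi$. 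One checks $\Xi$ is convex and, crucially, lower-semicontinuous for the relevant topology (here the earlier remark that $\pi\mapsto I_c[\pi]$ is l.s.c.\ under \asref{a1}, together with convexity from \asref{a1}(ii), is exactly what makes the perturbation function closed). Then the Fenchel--Rockafellar theorem (or the version of the duality theorem used in \cite{MR3706606}) identifies the biconjugate with the primal value, producing the dual supremum in \eqref{eo2}. The operator $R_c^\Gamma$ arises precisely as the partial infimum over $Q\in\Gamma(x)$ that collapses the inner transport constraint, i.e.\ as the pointwise conjugate coming from the domain constraint $\Gamma(x)$.

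The technical engine I expect to invoke is the nonemptiness and compactness already secured earlier: by \prref{p1} (with $\Omega=\X^2$), $\Pi_\Gamma(\alpha,\beta)=A\cap B\cap C$ is nonempty exactly when the marginal compatibility $\alpha(f^\Gamma)\leq\beta(f)$ holds for all $f\in\mathfrak{C}_H$, and under the weak-closedness hypothesis the feasible set is weakly compact, so the infimum defining $\mathcal{T}_c^\Gamma$ is attained and finite. This attainment plus the l.s.c.\ and convexity of $I_c$ is what lets the minimax exchange go through without a duality gap. I would structure the argument so that the l.s.c.\ perturbation function has a closed epigraph, apply the abstract duality once, and then unwind the conjugates to recover the explicit form of $R_c^\Gamma\phi$.

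The main obstacle will be establishing the closedness/lower-semicontinuity of the perturbed value function together with a qualification condition (some form of relative-interior or continuity point) guaranteeing \emph{no duality gap} and that the dual supremum is actually attained over $\Phi_b(\X)$ rather than a larger class. The linear-growth space $\Phi_b(\X)$ and the measures $\alpha\in\kP_G$, $\beta\in\kP_H$ with only first-moment-type integrability mean one must handle unboundedness carefully: the pairing $\int\phi\,d\beta$ need not be continuous on all of $\kP(\X)$, so I would work in the $\mathfrak{C}_H$-duality already set up in Section~2 and verify that the conjugate computations stay within this weighted framework. Checking that the inner infimum $R_c^\Gamma\phi$ inherits the right measurability and growth so that $\int R_c^\Gamma\phi\,d\alpha$ is well defined is the delicate bookkeeping step that makes the clean statement \eqref{eo2} rigorous.
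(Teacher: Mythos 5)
Your weak-duality half is correct and is essentially the ``$\leq$'' computation the paper also needs. For the hard direction, your overall strategy (convex duality on a space of measures, following \cite[Theorem 9.5]{MR3706606}) is the same family of argument as the paper's, but as written the proposal stops exactly where the work begins, and one framing choice would send you down a harder road than the one the paper takes. The paper does \emph{not} relax both marginals and invoke Fenchel--Rockafellar with a constraint qualification: it keeps $\alpha$ fixed, defines the value function $F(m)=\mathcal{T}_c^{\Gamma}(\alpha,m)$ on the space $\mathfrak{M}(\X)$ of signed measures with finite first moments (whose dual is identified with $\Phi(\X)$), and applies Fenchel--Moreau, $F=F^{**}$. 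That route needs no relative-interior or continuity-point condition; what it needs instead is a direct proof that $F$ is convex, proper and lower semicontinuous. This l.s.c.\ proof is the first load-bearing step missing from your sketch: one must show that $\mathfrak{Im}:=\{P\circ X_T^{-1}: P\in A\cap C\}$ is weakly closed and that, along a weakly convergent sequence $m_n\to m$, near-optimal plans $\pi^n\in\Pi_{\Gamma}(\alpha,m_n)$ are tight (a Prokhorov argument using that both marginals of $\pi^n$ sit, up to $\eps$, on compact sets), so that a limit plan lies in $\Pi_{\Gamma}(\alpha,m)$ and the lower semicontinuity of $I_c$ applies. This is precisely where the hypotheses $\Omega=\X^2$ and ``$A\cap C$ weakly closed'' are consumed; your proposal never engages with them. (Also, compactness of the feasible set and attainment are not needed for the duality, and $\mathcal{T}_c^{\Gamma}$ need not be finite; the Fenchel--Moreau route handles $F\equiv+\infty$ off $\mathfrak{Im}$ automatically.)

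The second missing step is the identification of the conjugate, $F^*(-\phi)=-\int R_c^{\Gamma}\phi(x)\,\alpha(dx)$. The inequality $F^*(-\phi)\leq -\int R_c^{\Gamma}\phi\,d\alpha$ is a pointwise estimate, but the reverse inequality requires exhibiting, for each $\eps>0$, a \emph{universally measurable} kernel $x\mapsto P^{\eps}(x,\cdot)\in\Gamma(x)$ with $\E^{P^{\eps}(x,\cdot)}[\phi(X_T)]+c(x,P^{\eps}(x,\cdot))\leq R_c^{\Gamma}\phi(x)+\eps$; this is supplied by the Jankov--von Neumann selection theorem (\cite[Proposition 7.50]{Shreve}) applied to the analytic graph of $\Gamma$. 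You flag this as ``delicate bookkeeping,'' but it is the step that makes the dual functional \emph{equal} to $\int R_c^{\Gamma}\phi\,d\alpha$ rather than merely bounded by it, and it cannot be omitted. Finally, the passage from $\Phi(\X)$ to $\Phi_b(\X)$ is obtained by replacing $\phi$ with $\phi\vee k$ and letting $k\to-\infty$, which also deserves a line. Nothing in your plan is wrong, but the two decisive technical steps are absent, and the constraint-qualification framing is not how the paper closes the duality gap.
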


\begin{proof}
We will apply Fenchel-Moreau theorem (see e.g. \cite[Theorem 4.2.1]{Borwein00convexanalysis}). For the rest of the proof, let $\mathfrak{M}(\mathbb{X})$ be the space of all Borel signed measures with finite first moments. We equip it with weak topology.

Consider $F:\,\mathfrak{M}(\mathbb{X})\mapsto[0,\infty]$ defined by $$F(m)= \mathcal{T}_c^{\Gamma}(\alpha,m)=\inf_{\pi \in \Pi_{\Gamma}(\alpha,m)} \int_\mathbb{X} c(x,\pi|_{X_0=x}) \alpha(dx),$$ with the convention $\inf \emptyset = + \infty.$ As $A\cap C$ is convex and weakly closed, first we show that the set
$$\mathfrak{Im}:=\{P \circ X_T^{-1}: P \in A \cap C\}$$
is also convex and weakly closed. Take any convergent sequence $\{m_n\}_{n \in \mathbb{N}} \subset \mathfrak{Im}$, with $\{\pi^n\}_{n \in \mathbb{N}} \subset A \cap C$ such that $\pi^n \circ X_T^{-1}=m_n$. For any $\epsilon > 0$,  since $\{m_n\}_{n \in \mathbb{N}}$ is relatively compact, we could find a compact set $K_{\epsilon} \subset \mathbb{X}$ such that $m_n(K_{\epsilon}) \ge 1-\epsilon$ for each $n$. Let $L_{\epsilon} \subset \mathbb{X}$ be a compact set such that $\alpha(L_{\epsilon} ) \ge 1-\epsilon$. We get that $\pi^n(L_{\epsilon} \times K_{\epsilon}) \ge 1-2\epsilon$ for each $n$ and therefore conclude $\{\pi^n\}_{n \in \mathbb{N}}$ is relatively compact by Prokhorov's Theorem. By the closedness of $A \cap C$, the limit $\pi$ of the sequence $\{\pi^n\}_{n \in \mathbb{N}}$ (up to a subsequence) is in $A\cap C$. It is clear that $\{m_n\}_{n \in \mathbb{N}}$ converges to $\pi \circ X_T^{-1} \in \mathfrak{Im}$ and we conclude.

Next, we show that $F$ is convex. Take $m_0,m_1 \in \mathfrak{P}(\mathbb{X})$. If either one of $F(m_0)$ and $F(m_1)$ is positive infinity, then we trivially have 
$$F(tm_0+(1-t)m_1) \leq tF(m_0)+(1-t)F(m_1),\quad\forall\,t \in (0,1).$$
Thus we assume $m_0,m_1 \in \mathfrak{Im}$ without loss of generality. Take $\pi^i \in \Pi_{\Gamma}(\alpha,m_i), i=0,1$. Since the cost function $c$ is convex in its second argument, it holds that 
\begin{equation*}
\begin{split}
F(tm_0+(1-t)m_1) & \leq \int c(x,t\pi^0|_{X_0=x}+(1-t)\pi^1|_{X_0=x})  \alpha(dx) \\
                                 & \leq t \int c(x,\pi^0|_{X_0=x}) \alpha(dx) + (1-t) \int c(x,\pi^1|_{X_0=x}) \alpha(dx). \\
\end{split}
\end{equation*}
Optimizing over $\pi^0,\pi^1$, we get that
$$F(tm_0+(1-t)m_1) \leq tF(m_0)+(1-t)F(m_1),\quad\forall t\in(0,1),$$
which implies the convexity of $F$.

Then we prove that $F$ is lower semicontinuous. Let $\{m_n\}_{n \in \mathbb{N}}$ converges to $m$ in the weak topology. If $m \notin \mathfrak{Im}$, then by the closedness of $\mathfrak{Im}$, we have that $m_n\notin\mathfrak{Im}$ for $n$ large enough. This implies that
$$\liminf_{n \to \infty} F(m_n)=+ \infty = F(m).$$
Now consider the case $m \in \mathfrak{Im}$. Without loss of generality, we assume the limit $\lim_{n \to \infty} F(m_n)$ exists and is finite.
Let $\pi^n \in \Pi_{\Gamma}(\alpha,m_n) \subset A \cap C$ such that $$\int c(x,\pi^n|_{X_0=x}) \alpha(dx) \leq F(m_n)+\frac{1}{n}.$$ By the same argument as in the second paragraph, we know $\{\pi^n\}_{n \in \mathbb{N}}$ is relatively compact. Extracting a subsequence, we can assume $\{ \pi^n \}_{n \in \mathbb{N}}$ converges to $\pi$ without loss of generality. It is easily seen that $\pi \in \Pi_{\Gamma}(\alpha,m)$. By \asref{a1},
$$F(m) \leq I_c[\pi] \leq \lim \inf_{n \to \infty} I_c[\pi_n]=\lim_{n \to \infty} F(m_n).$$

Notice that $\mathfrak{M}^*(\mathbb{X})$ can be identified with $\Phi(\mathbb{X})$(see e.g. \cite[Lemma 9.8]{MR3706606}), i.e., for any $l \in \mathfrak{M}^*(\mathbb{X})$, there is one corresponding $\phi \in \Phi(\mathbb{X})$ such that 
$$l(m)=\int_\mathbb{X} \phi(x) m(dx),\quad\forall\,m \in \mathfrak{M}(\mathbb{X}).$$
Therefore Fenchel-Legendre transform $F^*(l):=\sup\limits_{m \in \mathfrak{M}} \{l(m)-F(m)\}$ is equivalent to $F^*(\phi)=\sup\limits_{m \in \mathfrak{M}} \{\int \phi dm -F(m) \}.$
Applying Fenchel-Moreau theorem, we get that 
$$F(m)=\sup_{\phi \in \Phi(\mathbb{X})} \bigg\{ \int \phi \: dm - F^*(\phi) \bigg\}=\sup_{\phi \in \Phi(\mathbb{X})} \bigg\{ \int -\phi \: dm - F^*(-\phi) \bigg\}.$$ 
Replacing $\phi$ by $\phi  \vee k$ and letting $k \to -\infty$, we can restrict the last supremum to $\Phi_b(\mathbb{X})$. 

To conclude the proof, we show that
$$F^*(-\phi)=- \int R_c^{\Gamma} \phi(x) \alpha(dx),\quad \forall \phi \in \Phi_b(\mathbb{X}).$$ Since $F$ is positive infinity outside $\mathfrak{Im}$, we have that
\begin{equation*}
\begin{split}
F^*(-\phi) &=\sup_{m \in \mathfrak{Im}} \left\{\int -\phi \: dm -F(m)\right\}\\ 
            &=\sup_{m \in \mathfrak{Im}} \sup_{\pi \in \Pi_{\Gamma}(\alpha,m)} \left\{ \int -\phi \: dm- I_c[\pi]\right\} \\
          &=-\inf_{\pi \in A \cap C} \left\{ \int [\mathbb{E}^{\pi|_{X_0=x}}[\phi(X_T)]+c(x,\pi|_{X_0=x})] \: \alpha(dx)\right\} \\
          &\leq \int \left(-\inf_{Q \in \Gamma(x)} \{ [\mathbb{E}^{Q}[\phi(X_T)]+c(x,Q)]\} \right) \alpha(dx)\\
            &= - \int R_c^{\Gamma}\phi(x) \alpha(dx). \\
 \end{split}
\end{equation*}
On the other hand, for any $\eps>0$, by \cite[Proposition 7.50]{Shreve} there exists a universally measurable probability kernel $P^\eps: \mathbb{X} \times \mathfrak{P}(\Omega) \to \mathbb{R}$ such that 
$$\mathbb{E}^{P^\eps(x,\cdot)}[\phi(X_T)] + c(x,P^\eps(x,\cdot)) \leq R_c^{\Gamma}\phi(x)+\eps.$$
Therefore,
$$ F^*(-\phi) \geq -\int [\mathbb{E}^{P^\eps(x,\cdot)}[\phi(X_T)]+c(x,P^\eps(x,\cdot))] \alpha(dx) \geq - \int R_c^{\Gamma} \phi(x) \alpha(dx) - \eps.$$
Taking $\eps \to 0$, we conclude the result. 
\end{proof}

\begin{remark}
For $\Omega \not = \X^2$, weak closedness of $A \cap C$ cannot imply closedness of $\mathfrak{Im}$ (see \reref{r2}). But  
if we assume $A \cap C$ is convex and weakly compact, and let \asref{a1} hold, we still have \eqref{eo2} by using the same argument as above.
\end{remark}

\begin{corollary}
Let $\Omega=\mathbb{X}^2$. Assume $A\cap C$ is convex and weakly closed, and $c$ is given by \eqref{eo6}. Then 
$$\mathcal{T}_c^{\Gamma}(\alpha,\beta)=\sup_{(f,g) \in \mathcal{F}^{\Gamma}(\alpha,\beta)} \bigg\{   \int f(x) \; \alpha(dx) + \int g(y) \; \beta(dy)       \bigg\},$$
where 
\begin{equation*}
\mathcal{F}^{\Gamma}(\alpha,\beta) =\left\{  (f,g):
\begin{aligned}
& \;-g \in \Phi_b(\X); \\
&\; f(x)+\int g(y) \; p(dy) \leq \int C(x,y) \; p(dy),\ \forall x \in \mathbb{X}, p \in \Gamma(x)
\end{aligned}
\right\}
\end{equation*}
                                          
In particular, if we take $\Gamma(x)=\mathfrak{P}(\mathbb{X}), \; \forall x \in \mathbb{X}$, then it is easy to see that $(f,g) \in \mathcal{F}^{\Gamma}(\alpha,\beta)$ iff $f(x)+g(y) \leq C(x,y), \; \forall (x,y) \in \mathbb{X} \times \mathbb{X}$. In this case, we recover the classical duality result (see e.g. \cite[Theorem 1.42]{MR3409718}).
\end{corollary}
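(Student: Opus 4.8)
The plan is to deduce this from the duality \eqref{eo2} of \thref{t4} by showing that, once $c$ is specialized to \eqref{eo6}, the abstract dual over $\phi\in\Phi_b(\X)$ coincides with the supremum over pairs $(f,g)\in\mathcal{F}^\Gamma(\alpha,\beta)$. First I would record that with $c(x,Q)=\int\cC(x,y)\,Q\circ X_T^{-1}(dy)=\E^Q[\cC(x,X_T)]$ the operator appearing in \thref{t4} becomes
$$R_c^\Gamma\phi(x)=\inf_{Q\in\Gamma(x)}\E^Q[\phi(X_T)+\cC(x,X_T)].$$
Since \asref{a1} holds for such linear $c$ (as already noted), \thref{t4} applies and gives $\mathcal{T}_c^\Gamma(\alpha,\beta)=\sup_{\phi\in\Phi_b(\X)}\{\int R_c^\Gamma\phi\,d\alpha-\int\phi\,d\beta\}$. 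It thus suffices to prove that this last supremum equals $\sup_{(f,g)\in\mathcal{F}^\Gamma(\alpha,\beta)}\{\int f\,d\alpha+\int g\,d\beta\}$, via the substitution $\phi=-g$.

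For the inequality ``$\le$'' I would start from an arbitrary $\phi\in\Phi_b(\X)$ and set $g:=-\phi$ and $f:=R_c^\Gamma\phi$. Then $-g=\phi\in\Phi_b(\X)$, and for every $x$ and every $p\in\Gamma(x)$ the definition of the infimum gives $R_c^\Gamma\phi(x)\le\E^p[\phi(X_T)]+\E^p[\cC(x,X_T)]$, i.e. $f(x)+\int g(y)\,p(dy)\le\int\cC(x,y)\,p(dy)$; hence $(f,g)\in\mathcal{F}^\Gamma(\alpha,\beta)$ and $\int f\,d\alpha+\int g\,d\beta=\int R_c^\Gamma\phi\,d\alpha-\int\phi\,d\beta$. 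Taking the supremum over $\phi$ yields the ``$\le$'' direction. Conversely, for ``$\ge$'' I take any $(f,g)\in\mathcal{F}^\Gamma(\alpha,\beta)$ and put $\phi:=-g\in\Phi_b(\X)$. The defining constraint rewrites as $f(x)\le\E^p[\phi(X_T)]+\E^p[\cC(x,X_T)]=\E^p[\phi(X_T)]+c(x,p)$ for every $p\in\Gamma(x)$; minimizing over $p\in\Gamma(x)$ gives $f(x)\le R_c^\Gamma\phi(x)$ pointwise, so $\int f\,d\alpha\le\int R_c^\Gamma\phi\,d\alpha$ and therefore $\int f\,d\alpha+\int g\,d\beta\le\int R_c^\Gamma\phi\,d\alpha-\int\phi\,d\beta$. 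Taking the supremum over $(f,g)$ completes the equality, and hence the first formula of the corollary.

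For the final assertion I would specialize $\Gamma(x)=\kP(\X)$ (measures on the fibre $\{x\}\times\X$, identified with measures in the second coordinate). Testing the constraint against $p=\delta_{(x,y)}$ for each $y$ forces $f(x)+g(y)\le\cC(x,y)$ pointwise; conversely this pointwise inequality integrates against any $p\in\Gamma(x)$ to recover the constraint defining $\mathcal{F}^\Gamma(\alpha,\beta)$. Thus $\mathcal{F}^\Gamma(\alpha,\beta)$ reduces to the classical set of Kantorovich potentials $\{(f,g):f(x)+g(y)\le\cC(x,y)\}$, and the formula becomes the usual duality.

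The only delicate point I anticipate is the bookkeeping around measurability and integrability of $R_c^\Gamma\phi$: in the ``$\le$'' step one must be sure that $f=R_c^\Gamma\phi$ is an admissible choice, i.e. that $R_c^\Gamma\phi$ is (universally) measurable and that $\int R_c^\Gamma\phi\,d\alpha$ is exactly the quantity already appearing in \eqref{eo2}. This is precisely the regularity that the measurable-selection argument behind \thref{t4} guarantees (using the analyticity of $\Gr(\Gamma)$ and \asref{a1}), so no new work is needed beyond invoking it; the remaining inequalities are monotonicity of the integral and are routine.
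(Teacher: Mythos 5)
Your derivation is correct and is exactly the intended route: the paper states this corollary as an immediate consequence of Theorem~\ref{t4}, and the substitution $\phi=-g$, $f=R_c^{\Gamma}\phi$ (together with testing the constraint against Dirac masses for the unconstrained case) is precisely the translation between the two dual formulations. Your remark that the universal measurability of $R_c^{\Gamma}\phi$ is already supplied by the measurable-selection step in the proof of Theorem~\ref{t4} correctly disposes of the only delicate point.
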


\section{Monotonicity principle}
In this section, we provide a monotonicity principle and an application. We again use the usual weak topology. The monotonicity principle is as follows.

\begin{theorem}\label{t5}
Let \asref{a1} hold. Assume $A\cap C$ is convex and weakly compact (or convex and weakly closed when $\Omega=\X^2$), and $\mathcal{T}_c^\Gamma(\alpha,\beta)$ defined in \eqref{eo1} is finite. Let $\pi^{*}$ be an optimizer of  $\mathcal{T}_c^\Gamma(\alpha,\beta)$. Then there exists a Borel set $\Lambda \subset \mathbb{X}$ with $\alpha(\Lambda)=1$, such that if $x,x' \in \Lambda$, $m_x \in \Gamma(x), m_{x'} \in \Gamma(x')$, and
\begin{equation}\label{eo3}
m_x+m_{x'}=\pi^{*}|_{X_0=x}+\pi^{*}|_{X_0={x'}},
\end{equation}
then
\begin{equation}\label{eo5}
c(x,\pi^{*}|_{X_0=x})+c(x',\pi^{*}|_{X_0={x'}}) \leq c(x,m_x)+c(x,m_{x'}).
\end{equation}
\end{theorem}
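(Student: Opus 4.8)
The plan is to prove the monotonicity principle by contradiction, exploiting the Kantorovich duality established in Theorem~\ref{t4}. The intuition is that an optimal plan must be ``pointwise optimal'' when we are allowed to swap mass between the disintegrations at two base points $x,x'$; if swapping could strictly lower the cost on a set of positive $\alpha$-measure, we could construct a competitor plan that beats the optimizer, contradicting optimality. Since Theorem~\ref{t4} furnishes a maximizing dual potential (or an $\eps$-maximizing sequence), the cleanest route is to encode the swapping obstruction as a measurable ``bad set'' and show it must be $\alpha$-null.

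First I would set up the right measurable framework. Define the candidate bad set at the level of pairs: consider
$$
B:=\left\{(x,x',m,m')\in(\X)^2\times\kP(\Omega)^2:\ m\in\Gamma(x),\ m'\in\Gamma(x'),\ m+m'=\pi^{*}|_{X_0=x}+\pi^{*}|_{X_0=x'},\ \text{and \eqref{eo5} fails}\right\}.
$$
Using \asref{a1}(i) (lower semicontinuity of $c$) together with the analyticity of $\Gr(\Gamma)$, this set is analytic, so by the Jankov--von Neumann selection theorem (\cite[Proposition 7.49]{Shreve}) one can extract universally measurable selectors $x\mapsto m_x$ and $x'\mapsto m_{x'}$ witnessing the failure on its projection. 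The goal is to show this projection to the $x$-coordinate is $\alpha$-null; equivalently, to find a Borel $\Lambda$ with $\alpha(\Lambda)=1$ on which no such destructive swap exists.

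Next I would run the contradiction argument. Suppose for contradiction that the projected bad set has positive $\alpha$-measure. Using the selectors above, I would build a new transport plan $\tilde\pi$ that agrees with $\pi^{*}$ off a small ``swapping region'' and, on that region, replaces the pair of conditional measures $(\pi^{*}|_{X_0=x},\pi^{*}|_{X_0=x'})$ by the improving pair $(m_x,m_{x'})$. The marginal-preservation constraint \eqref{eo3} guarantees $\tilde\pi$ still has terminal marginal $\beta$ (the sum of the two disintegrations is unchanged, so integrating against $\alpha$ over the paired region leaves $\tilde\pi\circ X_T^{-1}$ fixed), while $\tilde\pi|_{X_0=x}\in\Gamma(x)$ keeps $\tilde\pi\in A\cap C$. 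By \eqref{eo5} failing on a positive-measure set, $I_c[\tilde\pi]<I_c[\pi^{*}]=\mathcal{T}_c^\Gamma(\alpha,\beta)$, contradicting optimality. The technical heart is implementing the swap measurably while preserving marginals: the standard device is to pair points via a fixed-point-free Borel involution on the bad region (e.g.\ splitting the region into two halves of equal $\alpha$-measure and transposing them) so that the constraint \eqref{eo3} can be applied pointwise and the terminal marginal is conserved after integration.

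The main obstacle I expect is precisely this measurable pairing and marginal bookkeeping: unlike the classical finite/discrete monotonicity arguments where one swaps at two fixed points, here we must simultaneously swap over a continuum of paired base points while (a) keeping each new conditional law admissible in $\Gamma$, (b) preserving the terminal marginal $\beta$ after integrating the disintegration against $\alpha$, and (c) retaining overall measurability of the modified kernel. I would handle (a) and (c) by the selection theorem as above, and (b) by the pairing-via-involution trick together with the invariance $m_x+m_{x'}=\pi^{*}|_{X_0=x}+\pi^{*}|_{X_0=x'}$, which is exactly what makes the summed disintegration—and hence the terminal marginal—invariant under the swap. A secondary subtlety is ensuring the strict improvement survives integration; this follows because on a positive-measure set the gap in \eqref{eo5} is strictly negative, and one may restrict to a sub-region where the gap is uniformly bounded below by some $\delta>0$ to make the strict inequality quantitative.
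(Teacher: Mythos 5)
Your proposal takes the competitor-construction route (the one the paper itself attributes to \cite{2017arXiv170804869B} in the remark following the theorem), whereas the paper's proof is a short duality argument: it takes a maximizing sequence $\{\phi_n\}$ for the dual problem \eqref{eo2}, defines $f_n(x):=R_c^{\Gamma}\phi_n(x)-\E^{\pi^*|_{X_0=x}}[\phi_n(X_T)]$, observes that $f_n\le c(x,\pi^*|_{X_0=x})$ pointwise while $\int f_n\,d\alpha\to\int c(x,\pi^*|_{X_0=x})\,\alpha(dx)$, extracts a subsequence converging $\alpha$-a.e.\ on a Borel set $\Lambda$, and then reads off \eqref{eo5} for any $x,x'\in\Lambda$ because the $\phi_n$-terms cancel exactly under \eqref{eo3}. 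You invoke Theorem~\ref{t4} only to produce a dual potential and then abandon it; the paper uses the dual potentials as the entire engine of the proof.

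As written, your argument has two genuine gaps. First, the ``pairing via a fixed-point-free involution / transposing two halves of equal $\alpha$-measure'' device does not work: the bad set is a specific analytic subset of $\X^2\times\kP(\Omega)^2$, and an arbitrary measure-preserving transposition of two halves of its $x$-projection gives no guarantee that the paired points $(x,T(x))$ actually lie in the bad set, nor that the selected $(m_x,m_{x'})$ (which depend jointly on the pair) can be reassembled into a universally measurable kernel satisfying \eqref{eo3} along the orbits of $T$. This is the hard technical core of the competitor approach and it is not resolved by citing Jankov--von Neumann. Second, and more fundamentally, even a fully rigorous contradiction argument of this type only shows that the set of improving pairs cannot carry a positive-measure measurable pairing; it does not yield the statement actually claimed, namely a \emph{single} Borel set $\Lambda$ with $\alpha(\Lambda)=1$ such that \eqref{eo5} holds for \emph{all} pairs $x,x'\in\Lambda$. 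An $(\alpha\otimes\alpha)$-null set of bad pairs need not be contained in the complement of $\Lambda\times\Lambda$ for any full-measure $\Lambda$ (consider $\{(x,x'):x-x'\in\mathbb{Q}\}$ for $\alpha$ Lebesgue on $[0,1]$), so passing from ``almost every pair'' to ``every pair from a full-measure set'' requires an additional idea. The duality proof avoids this entirely because the a.e.\ convergence of the single sequence $f_{n_k}$ hands you $\Lambda$ directly, and the pairwise inequality then holds for every pair drawn from it.
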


\begin{proof}
Take an optimizing sequence $\{\phi_n\}_n \in \Phi_b(\mathbb{X})$ for the right-hand-side of \eqref{eo2}. Note that 
$$\int \phi_n(y) \beta(dy)= \int_{x \in \mathbb{X}} \ \mathbb{E}^{\pi^{*}|_{X_0=x}}[\phi_n (X_T)] \ \alpha(dx).$$
We define 
\begin{equation}\label{eo4}
\begin{aligned}
&f_n(x):=R_c^{\Gamma} \phi_n(x)-\mathbb{E}^{\pi^{*}|_{X_0=x}}[\phi_n (X_T)] \\
=& \inf_{Q \in \Gamma(x)} \{ \mathbb{E}^Q[\phi_n(X_T)] + c(x,Q)-\mathbb{E}^{\pi^{*}|_{X_0=x}}[\phi_n (X_T)] \}. 
\end{aligned}
\end{equation}
Then it is clear that
$$ \int c(x,\pi^*_x) \alpha (dx) = \lim_{n \to \infty} \int f_n(x) \alpha(dx).$$
Since $\pi^{*}|_{X_0=x} \in \Gamma(x)$ $\alpha$- a.e, we have that $f_n(x) \leq c(x,\pi^{*}|_{X_0=x})$ by taking $p=\pi^{*}|_{X_0=x}$ on the right hand side of equation \eqref{eo4}. Because
$$\lim_{n \to \infty} \left(\int c(x,\pi^{*}|_{X_0=x})-f_n(x)\right) \,\alpha(dx)=0$$
and $c(x,\pi^{*}|_{X_0=x})-f_n(x) \geq 0$, we can find a Borel set $\Lambda \subset \mathbb{X}$ and a subsequence $f_{n(k)}$ such that $\alpha( \Lambda)=1$ and
$$\lim_{k \to \infty} f_{n_k}(x)=c(x,\pi^{*}|_{X_0=x})\quad\text{on}\ \Lambda.$$

It remains to show that $\Lambda$ has the monotonicity property. Let $x,x' \in \Lambda$ and $m_x \in \Gamma(x), m_{x'} \in \Gamma(x')$ satisfy \eqref{eo3}. By \eqref{eo4}, 
\begin{equation*}
\begin{split}
f_n(x)+f_n(x') &\leq \mathbb{E}^{m_x}[\phi_n(X_T)]+c(x,m_x)-\mathbb{E}^{\pi^*|_{X_0=x}}[\phi_n(X_T)] \\
                      &+ \mathbb{E}^{m_{x'}}[\phi_n(X_T)]+c(x',m_{x'}) -\mathbb{E}^{\pi^*|_{X_0=x'}}[\phi_n(X_T)] \\
                      &=c(x,m_x)+c(x',m_{x'}). \\
\end{split}
\end{equation*}
Then \eqref{eo5} follows by sending $n\to\infty$ 
\end{proof}
%

\begin{remark}
If we take $ \Omega =\mathbb{X}^2$ and $\Gamma(x)=\mathfrak{P}(\Omega^x), \; \forall x \in \mathbb{X}$, then our result recovers \cite[Proposition 4.1]{2017arXiv170804869B}. While we use Kantorovich duality in the proof, \cite{2017arXiv170804869B} uses a measurable selection argument.
\end{remark}

\subsection{Left-monotonicity when $\Omega=\mathbb{R}^2$}
In this part, we provide an application of \thref{t5}. It can be thought of as an extension of  \cite[Theorem 6.1]{MR3456332}.

Let $\Omega=\mathbb{R}^2$. Then $\Omega^x=\{x\} \times \mathbb{R}$ can be identified with $\mathbb{R}$, and $\{P|_{X_0=x}\}_{x \in \mathbb{R}}$ is the disintegration $\{P_x\}_{x \in \mathbb{R}}$. Let 
\begin{equation}\label{eq:constraint}
\Gamma(x)=\left\{Q \in \mathfrak{P}(\mathbb{R}):\  Q \{y: |y-x| \leq a(x)\}=1 , \; \int y \; Q (dy)=x \right\},
\end{equation}
where $a(\cdot)$ is a nonnegative, bounded and continuous function on $\mathbb{R}$.
\begin{definition}
A subset $\Delta \subset \mathbb{R}^2$ is called $\Gamma$-left monotone, if for every triple $(x,y^-),(x,y^+),(x',y') \in \Delta$ we cannot have the situation 
\begin{equation}
x< x', y^- < y '< y^+, |y'-x| \leq a(x), |y^- -x'| \leq a(x'), |y^+ -x'|\leq a(x').
\end{equation}
And a transport plan $\pi \in \mathfrak{P}(\mathbb{R}^2)$ is said to be $\Gamma$-left monotone if it concentrates on a $\Gamma$-left monotone set. 
\end{definition}

\begin{proposition}\label{prop:prop2}
Assume the cost function $c$ is given by
$$c(x,Q)=\int_{y\in \mathbb{R}}h(y-x)\,Q(dy),$$
where $h$ is a differentiable function on $\mathbb{R}$ with $h'$ strictly convex. Then any minimizer of the problem \eqref{eo1} is $\Gamma$-left monotone. 
\end{proposition}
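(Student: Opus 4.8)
The plan is to apply the monotonicity principle \thref{t5} and then argue by contradiction that the resulting monotone set $\Lambda$ cannot support a configuration violating $\Gamma$-left monotonicity. First I would invoke \thref{t5}: since $c(x,Q)=\int h(y-x)\,Q(dy)$ is linear in $Q$ (hence convex) and lower-semicontinuous, and $A\cap C$ is convex and weakly closed (indeed compact, by the one-period volatility argument of \prref{prop1} adapted to this $\Gamma$), \asref{a1} holds, so there is a Borel set $\Lambda\subset\mathbb{R}$ with $\alpha(\Lambda)=1$ such that for $x,x'\in\Lambda$ and $m_x\in\Gamma(x)$, $m_{x'}\in\Gamma(x')$ with $m_x+m_{x'}=\pi^*_x+\pi^*_{x'}$, the inequality \eqref{eo5} holds. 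I would also pass to the level of the support: choose a set $\Delta\subset\mathbb{R}^2$ with $\pi^*(\Delta)=1$ such that every fiber $\Delta_x$ lies in $\supp(\pi^*_x)$ and such that $\{x:(x,y)\in\Delta\}\subset\Lambda$.

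Next I would set up the contradiction. Suppose $\Delta$ is not $\Gamma$-left monotone, so there exist $(x,y^-),(x,y^+),(x',y')\in\Delta$ with $x<x'$, $y^-<y'<y^+$, and the feasibility constraints $|y'-x|\le a(x)$, $|y^--x'|\le a(x')$, $|y^+-x'|\le a(x')$. The idea is to build competitor kernels $m_x,m_{x'}$ by swapping a small amount of mass: move mass $\eps$ from $y'$ to $y^-,y^+$ in the $x$-fiber and a compensating amount from $y^-,y^+$ to $y'$ in the $x'$-fiber. Concretely, I would let $m_x=\pi^*_x-\eps\,\delta_{y'}+\eps p\,\delta_{y^-}+\eps(1-p)\,\delta_{y^+}$ and $m_{x'}=\pi^*_{x'}+\eps\,\delta_{y'}-\eps p\,\delta_{y^-}-\eps(1-p)\,\delta_{y^+}$, choosing $p\in(0,1)$ so that $py^-+(1-p)y^+=y'$. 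This choice guarantees that the barycenter (martingale) constraint $\int y\,m_x(dy)=x$ and $\int y\,m_{x'}(dy)=x'$ is preserved, and the support constraints are respected because all points used lie within the prescribed radii; it also makes $m_x+m_{x'}=\pi^*_x+\pi^*_{x'}$ automatic. Feasibility of $m_x,m_{x'}$ as nonnegative measures holds for $\eps$ small since $y',y^\pm$ are in the respective supports.

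Then I would compute the cost difference. By \eqref{eo5},
\begin{equation*}
c(x,\pi^*_x)+c(x',\pi^*_{x'})\le c(x,m_x)+c(x',m_{x'}),
\end{equation*}
which, after cancelling the common parts of the measures, reduces to the inequality
\begin{equation*}
\eps\bigl[p\,h(y^--x)+(1-p)h(y^+-x)-h(y'-x)\bigr]-\eps\bigl[p\,h(y^--x')+(1-p)h(y^+-x')-h(y'-x')\bigr]\ge 0.
\end{equation*}
Dividing by $\eps$ and reorganizing, this says
\begin{equation*}
p\bigl[h(y^--x)-h(y^--x')\bigr]+(1-p)\bigl[h(y^+-x)-h(y^+-x')\bigr]\ge h(y'-x)-h(y'-x').
\end{equation*}
Writing each bracket as $\int_{x}^{x'}\!-h'(t-\cdot)\,dt$ and using $py^-+(1-p)y^+=y'$, I would show that strict convexity of $h'$ forces the reverse strict inequality, giving the contradiction. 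The clean way is to define $\psi(y):=h(y-x)-h(y-x')=-\int_x^{x'}h'(y-t)\,dt$ and observe that $\psi''=-\int_x^{x'}h'''(y-t)\,dt$; since $h'$ is strictly convex, $h'''>0$ in the appropriate sense (or one argues directly via strict convexity of $y\mapsto h'(y-t)$ integrated over $t\in[x,x']$ with $x<x'$), so $\psi$ is strictly concave, whence $p\,\psi(y^-)+(1-p)\psi(y^+)<\psi(y')$, contradicting the displayed inequality.

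The main obstacle I anticipate is the regularity/sign bookkeeping in the last step: $h$ is only assumed differentiable with $h'$ strictly convex, so $h''$ and $h'''$ need not exist, and I cannot literally differentiate twice. The robust argument avoids derivatives of order higher than one: for fixed $t$, strict convexity of $h'$ gives $p\,h'(y^--t)+(1-p)h'(y^+-t)>h'(y'-t)$ (strict because $y^-<y'<y^+$ and $p\in(0,1)$), and integrating this strict inequality over $t\in[x,x']$—a nondegenerate interval since $x<x'$—yields $\psi(y')>p\,\psi(y^-)+(1-p)\psi(y^+)$, i.e. strict concavity of $\psi$ along the three points, contradicting \eqref{eo5}. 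Care must be taken that the strict inequality survives integration (it does, since the integrand is continuous in $t$ and strictly positive on a set of positive measure), and that the competitor measures are genuinely admissible, which is where the feasibility hypotheses $|y'-x|\le a(x)$ and $|y^\pm-x'|\le a(x')$ are used. Establishing that $\Delta$ can be taken with fibers inside the supports and $x$-projection inside $\Lambda$ is routine measurable-selection bookkeeping but should be stated carefully.
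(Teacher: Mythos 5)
Your overall strategy (invoke \thref{t5}, build $\Delta$ from the supports of the disintegration over $\Lambda$, and derive a contradiction from a local mass swap) is the same as the paper's, but the swap itself is set up in the wrong direction, and this breaks the proof in two linked places. First, you define $m_x=\pi^*_x-\eps\,\delta_{y'}+\eps p\,\delta_{y^-}+\eps(1-p)\,\delta_{y^+}$, which subtracts mass at $y'$ from the fiber $\pi^*_x$; but $y'$ is only known to lie in $\supp(\pi^*_{x'})$, not in $\supp(\pi^*_x)$, so $m_x$ need not be a nonnegative measure for any $\eps>0$ (and symmetrically for subtracting $\eps p\,\delta_{y^-}+\eps(1-p)\,\delta_{y^+}$ from $\pi^*_{x'}$); support membership would not even give you atoms to subtract. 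The admissible competitor goes the other way: remove mass near the outer points $y^\pm$ from $\pi^*_x$ and add mass near the inner point $y'$, and conversely for $\pi^*_{x'}$. It is exactly for the \emph{added} mass that the hypotheses $|y'-x|\le a(x)$ and $|y^\pm-x'|\le a(x')$ are needed; your construction never actually uses them, which is a warning sign, since without these conditions the configuration is not forbidden. Second, even granting admissibility of your $m_x,m_{x'}$, your contradiction rests on a sign error: $\psi(y):=h(y-x)-h(y-x')=+\int_x^{x'}h'(y-t)\,dt$ (not $-\int_x^{x'}$), so $\psi$ is strictly \emph{convex}, giving $p\,\psi(y^-)+(1-p)\psi(y^+)>\psi(y')$. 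With your direction of swap, \eqref{eo5} yields precisely $p\,\psi(y^-)+(1-p)\psi(y^+)\ge\psi(y')$, which is consistent with this convexity, so no contradiction arises. (With the correct direction, \eqref{eo5} yields the reversed inequality, and strict convexity of $\psi$ then does produce the contradiction.)

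To repair the argument you should follow the paper's construction: choose sub-measures $\mu\le\pi^*_x$ concentrated near $\{y^-,y^+\}$ and inside $\{y:|y-x'|\le a(x')\}$, and $\nu\le\pi^*_{x'}$ concentrated near $y'$ and inside $\{y:|y-x|\le a(x)\}$, with equal mass and barycenter (this replaces your explicit choice of $p$ with $py^-+(1-p)y^+=y'$ and handles the non-atomic case), and with $\mu$ supported in $\mathbb{R}\setminus(l,r)$ while $\nu$ is supported in $[l,r]$. Then set $m_x=\pi^*_x-\mu+\nu$, $m_{x'}=\pi^*_{x'}+\mu-\nu$, and compute the cost gap as $\int_x^{x'}dz\int h'(y-z)\,(\mu-\nu)(dy)$, which is strictly positive because for each $z$ the strictly convex function $y\mapsto h'(y-z)$ integrates to more against the more dispersed measure $\mu$ than against $\nu$ (this is \cite[Example 2.4]{MR3456332}). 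Your idea of integrating the pointwise strict convexity inequality over $t\in[x,x']$ rather than differentiating $h$ twice is exactly the right way to exploit the mere differentiability of $h$, but it must be applied with the correct sign of $\psi$ and to these spread-out sub-measures.
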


\begin{proof}

By \prref{prop1}, $A \cap C$ is convex and weakly compact. Let $\pi^*$ be a minimizer of \eqref{eo1}. Let $\Lambda$ be given in \thref{t5}, and
$$\Delta=\cup_{x \in \Lambda}\{(x,y): y \in \supp(\pi^*_x)\}.$$
It is clear that $\pi^*(\Delta)=1$. Suppose there exists a triple $(x,y^-),(x,y^+),(x',y') \in \Delta$ violates $\Gamma$-left monotonicity. We strive for a contradiction.

Because
$$y^- < y' < y^+,\quad\{y^-,y^+\} \subset \supp(\pi^*_x),\quad y' \in \supp(\pi^*_{x'}),$$
we can construct two measures $\mu, \nu$ together with real numbers $l,r$ satisfying the following property: 
\begin{align}
\notag &\{y^-,y^+\} \subset \supp(\mu) \subset \{y: |y-x'| \leq a(x')\},\quad \mu \leq \pi^*_x;\\ 
\notag &y' \in \supp(\nu) \subset \{y: |y-x| \leq a(x)\},\quad \nu \leq \pi^*_{x'};\\
\label{eo8}&\text{$\mu$ and $\nu$ have the same barycenter and the same mass};\\
\label{eo9}&\text{$\mu$ is concentrated on $\mathbb{R} \setminus (l,r)$ while $\nu$ is concentrated on $[l,r]$}.
\end{align}
Let
$$m_x:=\pi^*_x -\mu +\nu\quad\text{and}\quad m_{x'}:=\pi^*_{x'}+\mu-\nu.$$
 It is clear that $m_x+m_{x'}=\pi^*_x+\pi^*_{x'}$ and $m_x \in \Gamma(x), m_{x'} \in \Gamma_{x'}$. Thanks to \eqref{eo8}, \eqref{eo9} and the strict convexity of $h'$, we can apply \cite[Example 2.4]{MR3456332} and get that
 $$\int h'(y-x) \; \mu(dy) > \int h'(y-x) \; \nu(dy).$$
 
Now we have 
\begin{align*}
   \int h(y-x) & \; \pi^*_{x}(dy) + \int h(y-x') \; \pi^*_{x'}(dy)  - \int h(y-x) \; m_x(dy)- \int h(y-x') \; m_{x'}(dy) \\
 =& \int h(y-x)\; (\mu-\nu)(dy)  -\int h(y-x') \; (\mu-\nu)(dy) \\  
=&\int_x^{x'} dz \int_{y \in \mathbb{R}} h'(y-z) (\mu-\nu) (dy) >0,
\end{align*}
which contradicts \eqref{eo5}.
\end{proof}

Here is an example such that $\Gamma$-left monotone transport plans may not be left monotone. \begin{example}
Take $\alpha=\frac{1}{2}(\delta_0+\delta_5), \beta=\frac{1}{4}(\delta_{-2}+\delta_0+\delta_2+\delta_{10}),$ and $$\Gamma(x)=\left\{Q \in \mathfrak{P}(\mathbb{R}): Q \{y: |y-x| \leq 6 \}=1, \int y \ Q(dy)=x \right\}.$$
It can be easily checked that $\frac{1}{4}(\delta_{(0,-2)}+\delta_{(0,2)}+\delta_{(5,0)}+\delta_{(5,10)} )$ is the unique $\Gamma$-left monotone transport plan, while $\frac{1}{8}(2\delta_{(0,0)}+\delta_{(0,-2)}+\delta_{(0,2)}+\delta_{(5,-2)}+\delta_{(5,2)}+2\delta_{(5,10)} )$ is the left-curtain coupling (i.e. the unique left monotone transport plan; see \cite{MR3456332}). 
\end{example}

Next, we will prove that the minimizer of the problem \eqref{eo1} is unique if the initial distribution $\alpha$ concentrates on two points.

\begin{proposition}
Under the assumption of \prref{prop:prop2}, if the initial distribution $\alpha$ concentrates on two points, then there exists at most one optimizer of problem \eqref{eo1}. 
\end{proposition}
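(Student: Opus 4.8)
The plan is to combine the geometric conclusion of \prref{prop:prop2} — that every minimizer is $\Gamma$-left monotone — with the rigidity forced by having only two source points. Write $\alpha=p\,\delta_{x_1}+(1-p)\,\delta_{x_2}$ with $x_1<x_2$ and $p\in(0,1)$, and identify any $\pi\in\Pi_\Gamma(\alpha,\beta)$ with the pair of disintegrations $(\pi_{x_1},\pi_{x_2})$, where $\pi_{x_i}:=\pi|_{X_0=x_i}\in\Gamma(x_i)$, subject to $p\,\pi_{x_1}+(1-p)\,\pi_{x_2}=\beta$. Suppose $\pi$ and $\pi'$ are two minimizers; by \prref{prop:prop2} each is $\Gamma$-left monotone, carried by $\Gamma$-left monotone sets $\Delta$ and $\Delta'$, whose fibers $\Delta_{x_i}$ and $\Delta'_{x_i}$ carry $\pi_{x_i}$ and $\pi'_{x_i}$ respectively. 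I would show $\pi_{x_1}=\pi'_{x_1}$, which immediately forces $\pi=\pi'$.

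First I would record the identity coming from the shared terminal marginal: subtracting $p\,\pi_{x_1}+(1-p)\,\pi_{x_2}=\beta=p\,\pi'_{x_1}+(1-p)\,\pi'_{x_2}$ gives $\pi_{x_2}-\pi'_{x_2}=-\tfrac{p}{1-p}(\pi_{x_1}-\pi'_{x_1})$. Writing the Jordan decomposition $\pi_{x_1}-\pi'_{x_1}=\mu-\nu$ with $\mu,\nu\ge 0$ mutually singular, this yields $(\pi_{x_2}-\pi'_{x_2})^+=\tfrac{p}{1-p}\nu$ and $(\pi_{x_2}-\pi'_{x_2})^-=\tfrac{p}{1-p}\mu$. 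Since $\pi_{x_1}$ and $\pi'_{x_1}$ are probability measures with barycenter $x_1$, the measures $\mu$ and $\nu$ have equal total mass and equal barycenter $x_1$. Moreover the dominations $\mu\le\pi_{x_1}$ and $\tfrac{p}{1-p}\mu\le\pi'_{x_2}$ (together with the symmetric bounds for $\nu$) show that $\mu$ is carried by $\Delta_{x_1}\cap B_2$ and by $\Delta'_{x_2}\cap B_1$, while $\nu$ is carried by $\Delta_{x_2}\cap B_1$ and by $\Delta'_{x_1}\cap B_2$, where $B_i:=\{y:\,|y-x_i|\le a(x_i)\}$ are the admissible balls.

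The heart of the argument is then to feed these inclusions into the forbidden configuration of $\Gamma$-left monotonicity, being careful that the feasibility side conditions $|y'-x_1|\le a(x_1)$ and $|y^\pm-x_2|\le a(x_2)$ are met; this is exactly why I track membership in $B_1$ and $B_2$, and it is the step where the example showing ``$\Gamma$-left monotone $\neq$ left monotone'' warns that these conditions cannot be dropped. Using the monotone set $\Delta$ of $\pi$, with $y^-,y^+$ in the carrier of $\mu$ (hence in $\Delta_{x_1}\cap B_2$) and $y'$ in the carrier of $\nu$ (hence in $\Delta_{x_2}\cap B_1$), the triple $(x_1,y^-),(x_1,y^+),(x_2,y')$ is admissible and so cannot satisfy $y^-<y'<y^+$. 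Taking $y^\pm$ arbitrarily close to the endpoints of the smallest closed interval $[\ell_\mu,r_\mu]$ carrying $\mu$ gives $\nu\big((\ell_\mu,r_\mu)\big)=0$, and the symmetric argument applied to $\Delta'$ gives $\mu\big((\ell_\nu,r_\nu)\big)=0$, where $[\ell_\nu,r_\nu]$ is the smallest closed interval carrying $\nu$.

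Finally I would close with a short convex-geometry lemma: two nonnegative measures of equal mass and equal barycenter, each assigning zero mass to the open convex hull of the other, must coincide. A direct case analysis on the relative position of $[\ell_\mu,r_\mu]$ and $[\ell_\nu,r_\nu]$ forces the two intervals to be equal and both measures to sit on the shared endpoints $\{\ell_\mu,r_\mu\}$, whereupon equality of mass and barycenter pins down the two weights and gives $\mu=\nu$. Since $\mu\perp\nu$, this is possible only when $\mu=\nu=0$, i.e.\ $\pi_{x_1}=\pi'_{x_1}$ and hence $\pi=\pi'$. I expect the main obstacle to be the bookkeeping in this last combinatorial lemma, together with making the passage from the pointwise forbidden configuration to the measure-zero statement $\nu\big((\ell_\mu,r_\mu)\big)=0$ fully rigorous; once the identity for $\pi_{x_2}-\pi'_{x_2}$ and the carrier inclusions into $\Delta_{x_i}$, $\Delta'_{x_i}$, $B_1$, $B_2$ are in place, the remainder is routine.
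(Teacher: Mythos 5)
Your argument is correct, but it takes a genuinely different route from the paper's. The paper never uses $\Gamma$-left monotonicity of the two optimizers: after writing $\pi_0-\tilde{\pi}_0=\sigma^+-\sigma^-$ and localizing its support to $A_0\cap A_1$, it extracts sub-measures $\mu\leq\sigma^+$ and $\nu\leq\sigma^-$ of equal mass and barycenter with $\mu$ concentrated off an interval $(l,r)$ carrying $\nu$, and then builds the explicit competitor $\pi^*_0=\pi_0-\mu+\nu$, $\pi^*_1=\pi_1+\frac{p}{1-p}(\mu-\nu)$, whose cost is strictly smaller by the same $\int_x^{x'}\int h'(y-z)\,(\mu-\nu)(dy)\,dz>0$ computation as in the proof of \prref{prop:prop2}; this contradicts optimality of $\pi$ directly. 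You instead invoke \prref{prop:prop2} on \emph{both} optimizers and convert the forbidden triple into the measure statements $\nu\bigl((\ell_\mu,r_\mu)\bigr)=0$ and $\mu\bigl((\ell_\nu,r_\nu)\bigr)=0$, closing with a convex-geometry lemma. Your route is a clean second-order use of the monotonicity principle and avoids re-running the perturbation computation; the paper's route is self-contained at the level of the cost functional and needs only one optimizer to be perturbed. Two small repairs: the common barycenter of $\mu$ and $\nu$ need not be $x_1$ (only equality of the two barycenters follows from the martingale property, which is all you actually use); and the carrier inclusions should read $\supp(\mu)\subset\Delta_{x_1}\cap\Delta'_{x_2}\subset B_1\cap B_2$ and $\supp(\nu)\subset\Delta'_{x_1}\cap\Delta_{x_2}\subset B_1\cap B_2$, coming from the dominations $\mu\leq\pi_{x_1}$, $\frac{p}{1-p}\mu\leq\pi'_{x_2}$, $\nu\leq\pi'_{x_1}$, $\frac{p}{1-p}\nu\leq\pi_{x_2}$ (your labels are slightly jumbled, though the content is right). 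With these fixes the proof goes through.
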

\begin{proof}
Without loss of generality, we assume that $\alpha=p \delta_0 + (1-p) \delta_1$, where $p \in (0,1)$. Assuming that there are two optimizers $\pi$ and $\tilde{\pi}$, we prove the proposition by contradiction. Take $$A_0:=[-a(0),a(0)], \quad A_1:=[1-a(1),1+a(1)],$$
where $a(.)$ defines the constraint in \eqref{eq:constraint}. Define 
\begin{align*}
\beta_0=\beta|_{\supp(\beta)\setminus A_1}, \quad \beta_1=\beta|_{\supp(\beta) \setminus A_0}, \quad  \tilde{\beta}=\beta|_{A_0 \cap A_1}=\beta-\beta_0-\beta_1. 
\end{align*}
Note that the mass at initial position $0$ cannot be transported to $\supp(\beta) \setminus A_0$. Therefore the mass of $\beta_1$ must be transported from position $1$. Hence we have 
\begin{align*}
\pi_1|_{\supp{\beta} \setminus A_0}=\tilde{\pi}_1|_{\supp{\beta} \setminus A_0}=\beta_1/(1-p),
\end{align*}
 and similarly,
 \begin{align}\label{eq:firstmarginal}
\pi_0|_{\supp{\beta} \setminus A_1}=\tilde{\pi}_0|_{\supp{\beta} \setminus A_1}=\beta_0/p.
\end{align}

Since $\pi$ and $\tilde{\pi}$ are different, $\pi_0-\tilde{\pi}_0=\sigma^+-\sigma^-$ is a nontrivial signed measure with positive part $\sigma^+$ and negative part $\sigma^-$. Using \eqref{eq:firstmarginal}, the martingale condition, and the fact that $\pi_0(\mathbb{R})=\tilde{\pi}_0(\mathbb{R})=1$, we obtain that $\supp(\sigma^+) \cup \supp(\sigma^-) \subset A_0 \cap A_1$, and that
\begin{align}\label{eq:margmtg}
\int_{A_0 \cap A_1} x \ \sigma^+(dx)= \int_{A_0 \cap A_1} x \ \sigma^-(dx),  \quad \sigma^+(A_0 \cap A_1) =\sigma^-(A_0 \cap A_1).
\end{align}
Without loss of generality, assume that $y^+:=\max\{y: y \in \supp(\sigma^+)\} \geq \max \{y:y  \in \supp(\sigma^-)\}$, and that $\sigma^-(\{y^+\})=0$ if these two maximums are equal. Take $y^-:=\min\{y: y \in \supp(\sigma^+)\}$. As a result of \eqref{eq:margmtg}, there exists some $y' \in \supp(\sigma^-)$ such that $y^-< y'< y^+$. Therefore, we can find two positive measures $\mu, \nu$ together with two real numbers $l,r$ satisfying the following property: 
\begin{align*}
\notag &\{y^-,y^+\} \subset \supp(\mu) \subset \supp(\sigma^+),\quad \mu \leq \sigma^+;\\ 
\notag &y' \in \supp(\nu) \subset \supp(\sigma^-),\quad \nu \leq \sigma^-;\\
&\text{$\mu$ and $\nu$ have the same barycenter and the same mass};\\
&\text{$\mu$ is concentrated on $\mathbb{R} \setminus (l,r)$ while $\nu$ is concentrated on $[l,r]$}.
\end{align*}

Since $\pi$ and $\tilde{\pi}$ have the same terminal distribution, i.e., $p\pi_0+(1-p)\pi_1=p\tilde{\pi}_0+(1-p)\tilde{\pi}_1$, we can deduce that $\pi_1-\tilde{\pi}_1=\frac{p}{1-p}(\sigma^--\sigma^+)$, and hence $\frac{p}{1-p}\nu \leq \pi_1$. Construct a new coupling $\pi^*$ via $\pi^*_0=\pi_0-\mu+\nu$ and $\pi^*_1=\pi_1+\frac{p}{1-p}(\mu-\nu)$. Then by the same argument used in the last part of the proof of Proposition~\ref{prop:prop2}, it can be seen that 
\begin{align*}
p c(0, \pi^*_0)+(1-p)c(1,\pi^*_1)< p c(0, \pi_0)+(1-p)c(1,\pi_1),
\end{align*}
which contradicts our assumption that $\pi$ is an optimizer.
\end{proof}

\bibliographystyle{siam}
\bibliography{ref}

\appendix
\section{Proof of \prref{p1}}
\begin{proof}
``$\Longrightarrow$''. Take $P\in A\cap B\cap C$. For any $f\in\mathfrak{C}_H$, we have $\E^P|f(X_1)|<\infty$. Hence,
$$\beta(f)=\E^P[f(X_1)]=\E^P[\E^P[f(X_1)|X_0]]\geq\E^P[f^\Gamma(X_0)]=\alpha(f^\Gamma).$$

``$\Longleftarrow$''. Using a measurable selection argument, we can show that
\begin{equation}
\alpha(f^\Gamma)=\inf_{P\in A\cap C}\E^P[f],\quad \forall\,f\in \mathfrak{C}_H.
\end{equation}
Let
$$(A\cap C)\circ X_1^{-1}:=\{P\circ X_1^{-1}:\ P\in A\cap C\}.$$
Then $\beta$ is in the $H$-closure of $(A\cap C)\circ X_1^{-1} \cap \kP_H$, for otherwise by the Hahn-Banach  theorem (see e.g.\cite[Corollary 14.4]{MR0394084}) there would exist $f\in\mathfrak{C}_H$ such that
$$\beta(f)<\inf_{P\in A\cap C}\E^P[f(X_1)]=\alpha(f^\Gamma),$$
a contradiction.

Let $P_n\in A\cap C$ with $\beta_n:=P_n\circ X_1^{-1}$ such that $\beta_n\to\beta$ in the sense of \eqref{ee3}. It can be shown that the sequence $(P_n)$ is relatively $J$-compact (see \cite{Strassen}). Then there exists $P_\infty\in\kP_J$ such that up to a subsequence $P_n\to P_\infty$ in the sense of \eqref{ee3}. As $A\cap C$ is $J$-closed, $P_\infty\in A\cap C$. Moreover, $P_n\circ X_1^{-1}=\beta^n\to\beta$ implies that $P_\infty\in C$. The conclusion follows.
\end{proof}

\end{document}